\documentclass[12pt]{amsart}
\usepackage[utf8]{inputenc}
\usepackage{graphicx,hyperref}
\usepackage{subfiles}
\usepackage{amsmath}
\usepackage{amssymb}
\usepackage[utf8]{inputenc}
\usepackage[enableskew]{youngtab}
\usepackage{ytableau}
\usepackage[margin=1in]{geometry}
\usepackage{xcolor}
\usepackage{soul}
\usepackage{tikz}
\usepackage{latexsym,amscd,amssymb,epsfig}
\usepackage{epstopdf}
\usepackage{varwidth}
\usepackage{multicol}
\usepackage{young}
\usepackage{caption}
\usetikzlibrary{calc}

\newcommand{\SHS}{\mathrm{SHS}}

\newcommand{\sh}{{\rm sh\,}}

\newcommand{\ov}{\overline}

\newcommand{\cho}{\stackrel {\rm  Ch}{\leq}} 
\newcommand{\chos}{\stackrel {\rm  Ch}{<}}

\newcommand{\cellsize}{15}
\newlength{\cellsz} \setlength{\cellsz}{\cellsize\unitlength}
\newsavebox{\cell}
\sbox{\cell}{\begin{picture}(\cellsize,\cellsize)
\put(0,0){\line(1,0){\cellsize}} \put(0,0){\line(0,1){\cellsize}}
\put(\cellsize,0){\line(0,1){\cellsize}}
\put(0,\cellsize){\line(1,0){\cellsize}}
\end{picture}}
\newcommand\cellify[1]{\def\thearg{#1}\def\nothing{}%
\ifx\thearg\nothing \vrule width0pt height\cellsz depth0pt\else
\hbox to 0pt{\usebox{\cell} \hss}\fi%
\vbox to \cellsz{ \vss \hbox to \cellsz{\hss$#1$\hss} \vss}}
\newcommand\tableau[1]{\vtop{\let\\\cr
\baselineskip -16000pt \lineskiplimit 16000pt \lineskip 0pt
\ialign{&\cellify{##}\cr#1\crcr}}}

\theoremstyle{plain}
   \newtheorem{theorem}{Theorem}[section]
   \newtheorem{proposition}[theorem]{Proposition}

   \newtheorem{lemma}[theorem]{Lemma}
\theoremstyle{definition}
   \newtheorem{definition}[theorem]{Definition}
   \newtheorem{example}[theorem]{Example}

\numberwithin{equation}{section}

\newcommand\sym{{\mathfrak{S}}}
\newcommand\len{{\mathrm{length}}}

\newcommand\Knuth{{\underset{K}{\sim}}}
\newcommand\dKnuth{{\underset{d-K}{\sim}}}

\newcommand\Inv{{\mathrm{Inv}}}
\newcommand\Des{{\mathrm{Des}}}
\newcommand\Par{{\mathrm{Par}}}

\definecolor{scrippsgreen}{rgb}{.24,.55,.24}

\begin{document}

\title{A New Partial Order on SYT}
\author{Gizem Karaali}
\address{Department of Mathematics, Pomona College, Claremont, CA 91711, USA}
\email{gizem.karaali@pomona.edu}
\author{Isabella Senturia}
\address{Department of Mathematics, Pomona College, Claremont, CA 91711, USA}
\email{iesa2016@mymail.pomona.edu}
\author{M\"{u}ge Ta\c{s}kin}
\address{Department of Mathematics, Bo\v{g}azi\c{c}i \'{U}niversitesi, 34342 Bebek, Istanbul, Turkey}
\email{muge.taskin@boun.edu.tr}

\date{\today}

\begin{abstract}
We define a new partial order on $SYT_n$, the set of all standard Young tableaux with $n$ cells, by combining the chain order with the notion of horizontal strips. We prove various desirable properties of this new order.
\end{abstract}

\keywords{standard Young tableaux, partial orders}

\subjclass[2010]{05E10, 20C30}

\maketitle

\section{Introduction}

There are several partial orders defined on the complete set $SYT_n$ of standard Young tableaux of $n$ boxes, including the chain order (established via  the dominance order on  restricted tableaux), the geometric order (related to the preorder on $\sym_n$ induced from the geometric order on the orbital varieties associated to the Lie algebra $\mathfrak{sl}_n$), and the Kazhdan-Lusztig (KL) order (a combinatorial equivalence relation on permutations stemming from KL polynomials \cite{Kazhdan-Lusztig} related to
the theory of primitive ideals (cf.\ \cite{Joseph, Vogan})).
Melnikov
introduced additional orders, such as the induced Duflo order (also known as the weak order) \cite{Melnikov1}, the Duflo-Chain order, and the Vogan-Chain order \cite{Melnikov4}, in order to provide combinatorial definitions of the KL and geometric orders. Taskin \cite{Taskin} interpreted several combinatorial characterizations of the geometric order, chain order, KL order, and  weak order.

In this paper, we introduce a new partial order on $SYT_n$, which we call {\it the chain-strip order}. Our ultimate goal is to converge to  combinatorially  intuitive descriptions of the various partial orders already defined on $SYT_n$; along the way, we explore the intricacies of other natural combinatorial constructions on Young tableaux.

This short paper is organized as follows: In Section \ref{S:ChainStripOrder} we introduce the chain-strip order: it is defined by juxtaposing an intuitive shape condition with the deconstruction of a given standard Young tableau into horizontal strips of boxes.
In Section \ref{properties of the order}, we
explore the various desirable properties of the chain-strip order. In particular, we show that it is preserved by a handful of standard combinatorial operations and is compatible with several natural constructions. Section \ref{remaining questions} concludes the paper. Here we explore two more properties that might be desirable to have for a partial order on $SYT_n$; this time these are not quite satisfied by the chain-strip order but our observations allow us to propose possible promising avenues of future work.

\section{The Chain-Strip Order on $SYT_n$} 
\label{S:ChainStripOrder}

After a quick overview of basic notation (\S\S\ref{SS:PartitionsETC}), we define the notion of a sequence of horizontal strips (\S\S\ref{SS:SHSDefinition}) and use it to define the chain-strip order (\S\S\ref{SS:DefineC-S}). The final subsection (\S\S\ref{descreint}) offers a recharacterization of the chain-strip order in terms of descent sets.

\subsection{Partitions, permutations and tableaux} 
\label{SS:PartitionsETC}

 A \emph{partition} $\lambda$ of a natural number \textit{n}, also known as an \emph{integer partition}, is an ordered tuple of positive numbers $(\lambda_1, \lambda_2, \cdots, \lambda_k)$ such that $\lambda_1 + \lambda_2 + \cdots +\lambda_k = n$ and $\lambda_1 \geq \lambda_2 \geq \cdots \geq \lambda_k$. 
 There is a well-known order on partitions called the \emph{dominance order}. For our purposes, we will define here the \emph{opposite dominance order}:

\begin{definition}
\label{D:OppositeDominance}
Let $\lambda=(\lambda_1, \lambda_2, ..., \lambda_k), \mu=(\mu_1, \mu_2, ..., \mu_l)$ be two  partitions of $n$.
We say that $\lambda \leq_{dom}^{opp} \mu$ in the opposite dominance order, if 
		$$
	 \lambda_1 + \lambda_2 + ... + \lambda_i \geq \mu_1 + \mu_2 +... + \mu_i  \text{ for all } 1 \leq i \leq min(k,l).$$
\end{definition}
 
 \textit{Young diagrams} are all possible arrangements of \textit{n} boxes into rows and columns, with the number of boxes in each subsequent row weakly decreasing. We associate a partition to a given Young diagram as a descriptor for the number of boxes per row. 
 
For a partition $\lambda$ of $n$, a \textit{standard Young tableau $S$ of shape $\lambda$} is built from the Young diagram of shape $\lambda$ by filling it with the numbers 1 to $n$, each occurring exactly once and strictly increasing across rows (left to right) and down columns. In this case, we call $\lambda$ the \emph{shape of $S$} and denote it by $\sh(S)$. We denote the set of all standard Young tableaux made by filling in Young diagrams with $n$ boxes $SYT_n$. In what follows we will also need the notion of a {\it partial tableau}, that is, a Young diagram filled with distinct integers increasing across rows and down columns. Then a partial tableau $T$ of $n$ boxes is a standard Young tableau if and only if its entries are precisely the integers $1, \cdots, n$.

Given $\pi \in S_n$, the \textit{Robinson-Schensted-Knuth} algorithm bijectively assigns to it a pair of tableaux $(P(\pi), Q(\pi)) \in SYT_{n}\times SYT_{n}$; $P(\pi)$ is called the \emph{insertion tableau} of $\pi$ and $Q(\pi)$ is called the \emph{recording tableau} of $\pi$. The insertion tableau $P(\pi)$ can be obtained inductively via a sequence of {\it Robinson-Schensted row insertions}; we will denote the row insertion into a partial tableau $T$ of the integer $a$ not among the entries of $T$, by $r_a(T)$. One can analogously define Robinson-Schensted column insertions, denoted by $c_a(T)$; see \cite{Sagan} for details.

For each tableau $S \in SYT_n$, there is a unique associated set consisting of permutations whose insertion tableau is $S$.  The collection of these sets induces an equivalence relation on the set of permutations in $S_n$. Knuth \cite{Knuth} established another equivalence relation $\underset{K}{\sim}$ on $S_n$ where two permutations $\pi,\sigma \in S_n$ have  the same insertion tableau if and only if $\pi \underset{K}{\sim} \sigma$.\footnote{We say  $\sigma, \tau \in S_{n}$ differ by a single  {\it Knuth relation},  if
\[\sigma=x_{1} \ldots x_{i-1}{x}_{i}{x}_{i+1} x_{i+2} \ldots x_{n} \textmd{ and }
	\tau=x_{1} \ldots x_{i-1}x_{i+1}{x}_{i}x_{i+2}\ldots x_{n},\]
	where either $x_{i-1}$ or $x_{i+2}$  lies between $x_{i}$ and $x_{i+1}$. Two permutations are called  {\it Knuth
		equivalent}, written $\sigma  ~\Knuth~ \tau$, if one of them can be obtained
	from the other by applying a sequence of  Knuth relations. Analogously, one can define dual Knuth relations: see Footnote \ref{FN:DualKnuth}.}
So, the set of all permutations having the same insertion tableaux $S$  is called the \textit{Knuth class of $S$} and denoted by $\{\kappa_S\}_{S\in SYT_n}$.
For more details see \cite{Sagan}.

The Knuth equivalence relation and Knuth classes are relevant for the definition of the weak order. To prepare for the comparison that we make between the order we will define in this paper and the weak order, we recall the definition of the weak Bruhat order here.

\begin{definition}
The ({\it right}) {\it weak Bruhat order}, $\leq_{weak}$, on  $S_n$ is obtained by taking the transitive closure of the following relation: $$\sigma \leq_{weak} \tau  ~\text{if}~ \tau=\sigma \cdot s_i  \text{ and } \len(\tau)=\len(\sigma)+1$$ where $s_i$ denotes  the adjacent transposition $(i, i+1)$ and $\len(\sigma)$ measures the size of a reduced word of $\sigma$. 
\end{definition}

\noindent
{Alternatively, the weak Bruhat order on permutations can be characterized \cite[Prop. 3.1]{Bjorner} in terms of (left) inversion sets as follows: $$\sigma \leq_{weak} \tau ~\text{if and only if} ~~ \Inv_L(\sigma)\subset
\Inv_L(\tau)$$ where $\Inv_L(\sigma):=\{(i,j): 1 \leq i < j \leq n ~\text{
	and }~
	\sigma^{-1}(i) > \sigma^{-1}(j)\}$.
	}

The weak Bruhat order induces an order on $SYT_n$, called the weak order (first introduced by Melnikov \cite{Melnikov1} under the name {\it induced Duflo order}):
	
	\begin{definition}
		\label{weak-order-def}
		The {\it weak order} $(SYT_n,\leq_{weak})$ is the partial order induced by taking the transitive closure of the following relation:
		$$ \begin{aligned}
		S\leq_{weak} T & \mbox{ if there exist } ~\sigma\in \kappa_S, ~\tau\in \kappa_T \mbox{ such that } ~\sigma\leq_{weak} \tau.
		\end{aligned}
		$$
	\end{definition}

\subsection{Sequences of horizontal strips (SHS)}
\label{SS:SHSDefinition}

Inspired by the growth diagrams of Fomin \cite{Fomin}, and following Roby {\it et al}. \cite{Roby}, we first make the following definitions:

Identifying partitions $\lambda = (\lambda_1, \lambda_2, \cdots, \lambda_l)$ with shapes (Young diagrams) as we have done in the previous section, we can induce a partial order on shapes determined by ``componentwise comparison of sequences'', or equivalently, by reverse lexicographic order: {Given $\alpha = (\alpha_1, \alpha_2, \cdots, \alpha_m)$ and $\beta = (\beta_1, \beta_2, \cdots, \beta_l)$, we will say that $\alpha \subset \beta$ if and only if $ m \le l$ and  $\alpha_i \le \beta_i$ for all $i \le m$.}
Then for $\alpha \subset \beta$, we can define the {\it skew shape} $\beta \slash\alpha$ consisting of boxes in $\beta$ that are not in $\alpha$. 
A {\it skew shape} $\beta \slash \alpha$ is called a {\it horizontal strip} if no column of $\beta \slash \alpha$ contains more than one box. 

A tableau $T \in SYT_n$ with shape $\beta$ then gives rise to a {\it chain} of tableaux $0 = T^0 \subset T^1 \subset \cdots \subset T^k = T$ with a corresponding chain $\emptyset = \beta^0 \subset \beta^1 \subset \cdots \subset \beta^k = \beta$ of shapes, where the successive skew shapes $\beta^i \slash \beta^{i-1}$, $1 \le i \le k$, are horizontal strips{; we will fill these skew shapes with consecutive integers in such a way that all together they constitute the tableau $T$}. In this way we can ``grow'' the tableau $T$ by starting with the empty tableau and adding horizontal strips filled with the right numbers at each step. In the following we will be interested in the coarsest possible chain of shapes $\emptyset = \beta^0 \subset \beta^1 \subset \cdots \subset \beta^k = \beta$ (or equivalently, of tableaux) for $T$ that still satisfies the condition that $\beta^i\slash \beta^{i-1}$ all be horizontal strips: this corresponds to minimizing the {\it length} $k$ of the chain. We will call that minimal-length chain {\it the growth chain} of $T$.

\begin{example}
\label{E:FirstSHSTableau}
Let: 
\[ T=\tableau{1& 2 & 4&6 \\  3& 5\\7\\8 }.\]
Then we can ``grow'' the tableau $T$ by adding horizontal strips one after another, as follows:
\[ {\emptyset \;\; \subset \;\; \tableau{{\textbf{1}}& {\textbf{2}}} \;\; \subset \;\; \tableau{1& 2&{\textbf{4}}\\ {\textbf{3}}}\;\; \subset \;\; \tableau{1& 2 & 4&{\textbf{6}} \\  3&{\textbf{5}}} \;\; \subset \;\; \tableau{1& 2 & 4&6 \\  3& 5\\{\textbf{7}}}\;\; \subset \;\; \tableau{1& 2 & 4&6 \\  3& 5\\7\\{\textbf{8}}} = T.} \]

\newpage
Here we have the growth chain $\beta^0 \subset \beta^1 \subset \beta^2 \subset \beta^3 \subset \beta^4 \subset \beta^5 = \beta$, where the successive skew shapes $\beta^i \slash \beta^{i-1}$, $1 \le i \le 5$, are all horizontal strips. 
Note that we can identify this collection of horizontal strips with a sequence of sequences of consecutive integers:
\[ ((1,2),(3,4),(5,6),(7),(8))\]
In this manner the growth chain for our tableau $T$ corresponds to a sequence of sequences of consecutive integers describing the particular filling of the Young diagram of shape $\beta$ that yields $T$.
\end{example}

The example above motivates the following definition:

\begin{definition} \label{P-shsdef}
Let $S \in SYT_n$ and let $\emptyset = \beta^0 \subset \beta^1 \subset \cdots \subset \beta^k = \sh(S)$ be the growth chain of $S$. The \emph{sequence of horizontal strips} (SHS) of \textit{S} is the sequence $s = (s_1, s_2, \cdots, s_k)$ consisting of $k$ sequences of consecutive numbers given by:
\[s_i = (|\beta^{i-1}|+1, \cdots, |\beta^i|),\]
where $|\beta|$ is the number of boxes of the Young diagram of shape $\beta$.
\end{definition}

\begin{example} Note that two distinct tableaux with the same shape might have the same SHS. 
For example the two tableaux:
\[ S_1= \tableau{1&2&4\\3&6\\5} \qquad \textmd{ and } \qquad S_2= \tableau{1&2&6\\3&4\\5} 
\]
of shape $(3,2,1)$ have the same SHS: $((1,2),(3,4),(5,6))$.
\end{example}

In the following we will often want to compare two SHS. To do this, we first associate a set partition to a given SHS as follows: If $s = (s_1, s_2, \cdots, s_k)$ is the SHS of $S \in SYT_n$, consisting of $k$ sequences, with $s_i$ being made up of $|s_i|$ consecutive numbers, then we define 
the {\it set partition} $\mathrm{SP}(s)$ of $s$ to be
\small 
\[\mathrm{SP}(s) = \{\{1, 2, \cdots, |s_1|\},\{|s_1|+1, |s_1|+2,\cdots, |s_1|+|s_2|\},\cdots,\{|s_1|+|s_2|+\cdots+|s_{k-1}|+1, \cdots n\}\}. \] \normalsize
That is, $\mathrm{SP}$ is the forgetful functor that assigns to each sequence of horizontal strips the underlying set partition of $[n] = \{1,2,\cdots,n\}$.

Recall that a set partition $\alpha$ of the set $[n]$ is a {\it refinement} of a set partition $\beta$ of $[n]$ (and we write $\alpha \le \beta$) if every element of $\alpha$ is a subset of some element of $\beta$ (and we say that $\alpha$ is {\it finer} than $\beta$ and $\beta$ is {\it coarser} than $\alpha$). 
We can now carry this notion over to the world of SHS. In particular if $s$ and $s^{\prime}$ are the respective SHS for two tableaux $S, S^{\prime} \in SYT_n$, we will say that $s$ is a {\it refinement} of $s^{\prime}$ (and we will write $s \le s^{\prime}$) if and only if $\mathrm{SP}(s)$ is a refinement of $\mathrm{SP}(s^{\prime})$. In the following we will need the notion of a {\it one-step refinement}, which will correspond to the case where $s \le s^{\prime}$ and $|s|= |s^{\prime}|+1$.

So far we have been working with tableaux and we have defined a new notion, the SHS, for any given tableau. We can define an analogous notion for permutations $w\in S_n$.\linebreak  To do this we first recall that given a permutation $\pi=x_1x_2...x_n \in S_n$ written in one-line notation, an {\it increasing subsequence of length $k$} of $\pi$ is a sequence $(x_{i_1},x_{i_2}, \cdots,x_{i_k})$ such that 
\[ x_{i_1} < x_{i_2} < \cdots < x_{i_k}, \]
where $i_1 < i_2 < \cdots < i_k$.
We call an increasing subsequence  {\it an interval } if  it consists of consecutive integers. 
Now we can make the following definition:


\begin{definition}\label{P-D:SHS-word}
Let  $\pi=x_1x_2...x_n \in S_n$, written in one-line notation. Then the sequence $(\pi^1,\pi^2,...,\pi^k)$ is called the \emph{sequence of intervals} $\mathrm{SI}(\pi)$ of $\pi$ if $\pi^1,\pi^2,...,\pi^k$ is the complete list of maximal intervals in $\pi$ ordered in a way that removing parentheses in $(\pi^1,\pi^2,...,\pi^k)$ results in the ordered set $1,2,\ldots,n$.

\end{definition}

\begin{example}
    For $\pi = (1263574)$, the sequence of intervals $\mathrm{SI}(\pi)$ of $\pi$ is $((1,2,3,4), (5),(6,7))$. 
\end{example}

Note that if $\pi~\Knuth~\pi'$ for $\pi, \pi' \in S_n$, then $\mathrm{SI}(\pi)=\mathrm{SI}(\pi')$: one Knuth relation cannot change the sequence of intervals, as no integers exist between $i$ and $i+1$, $i$ and $i+1$ can never be swapped in a Knuth relation.

Definitions \ref{P-shsdef} and \ref{P-D:SHS-word} are  interrelated through the RSK algorithm. To see this recall that for a tableau $S \in SYT_n$,  with rows $R_1, R_2,...,R_k$, where $R_i$ represents the contents of the  $i^{th}$ row from the top read from left to right, the \textit{row word of $S$} is the permutation $$\pi_S=R_kR_{k-1}...R_1.$$

\begin{example}\label{P-shsexamp}
    Now consider the row word  $\pi_S = (536912478)$ of the tableau \[ S= \tableau{1&2&4&7&8\\3&6&9\\5}, \]  Then the sequence of intervals $\mathrm{SI}(\pi_S)$ of $\pi_S$ is $((1,2),(3,4),(5,6,7,8),(9))$, which corresponds directly to $\SHS(S)$, the tableau's SHS.
\end{example}

More generally, if $S \in SYT_n$, then it is straightforward to see that the sequence of intervals $SI(\pi_S)$ for the row word $\pi_S \in S_n$ of $S$ is the same as $\SHS(S)$. Then together with the fact that Knuth equivalence preserves sequences of intervals, this implies that the sequence of intervals of any permutation in a tableau's Knuth class is the same as the SHS of that tableau.
We summarize this in the following:
\begin{proposition}\label{P:SIsameasSHS}
Let $\pi$ be a permutation in the Knuth class of some standard Young tableau $S \in SYT_n$. Then the sequence of intervals $\mathrm{SI}(\pi)$ of $\pi$ and the sequence of horizontal strips $\SHS(S)$ of $S$ are the same.
\end{proposition}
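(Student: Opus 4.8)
The plan is to prove the equality in two moves: first reduce a general permutation in the Knuth class of $S$ to the row word $\pi_S$, and then identify $\mathrm{SI}(\pi_S)$ with $\SHS(S)$.

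For the reduction, recall the standard property of Robinson--Schensted insertion that the row word $\pi_S = R_k R_{k-1}\cdots R_1$ of a standard Young tableau $S$ satisfies $P(\pi_S)=S$ --- inserting the rows of $S$ from the bottom upward rebuilds $S$ --- so $\pi_S$ belongs to the Knuth class of $S$. Hence any $\pi$ in the Knuth class of $S$ is Knuth equivalent to $\pi_S$, i.e.\ is obtained from $\pi_S$ by a sequence of Knuth relations. As observed in the paragraph following Definition \ref{P-D:SHS-word}, no single Knuth relation can interchange $i$ and $i+1$, hence none can alter the sequence of intervals; so $\mathrm{SI}(\pi)=\mathrm{SI}(\pi_S)$, and it suffices to prove $\mathrm{SI}(\pi_S)=\SHS(S)$.

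Next I would compute $\mathrm{SI}(\pi_S)$ explicitly. A block $\{p,p+1,\dots,q\}$ of consecutive integers is an interval of $\pi_S$ exactly when $p,p+1,\dots,q$ occur in this left-to-right order in $\pi_S$, which, by transitivity of positions, holds if and only if $v$ precedes $v+1$ in $\pi_S$ for every $p\le v<q$. In $\pi_S=R_k\cdots R_1$ the entries of a lower row come before the entries of a higher row, and within a row entries increase from left to right; together with the fact that the entries of $S$ increase along rows and down columns, this yields the clean criterion that $v$ precedes $v+1$ in $\pi_S$ if and only if $\mathrm{row}_S(v)\ge \mathrm{row}_S(v+1)$, where $\mathrm{row}_S(v)$ is the index of the row of $S$ containing $v$. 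Therefore the maximal intervals of $\pi_S$ are precisely the maximal runs of consecutive integers occupying weakly decreasing rows in $S$; equivalently, the breaks of $\mathrm{SI}(\pi_S)$ occur exactly at those $v$ with $\mathrm{row}_S(v)<\mathrm{row}_S(v+1)$.

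Finally I would match this against $\SHS(S)$, showing that the growth chain of $S$ subdivides $[n]$ into exactly these runs. One direction is straightforward: the cells of such a run form a horizontal strip over the cells of the earlier runs, because two cells of one run in a common column would force the larger entry into a strictly lower row than the smaller --- contradicting the weak decrease of rows along the run --- so these runs do assemble into a valid chain of tableaux growing $S$ by horizontal strips. The substance of the proposition is to show that this chain is the growth chain, so that its strips are the $s_i$ of $\SHS(S)$; this identification --- linking the defining (coarsest) property of the growth chain to the concrete ``weakly decreasing rows'' condition read off from $\pi_S$ --- is where the real work lies and is the step I expect to be the main obstacle. Granting it, $\mathrm{SI}(\pi_S)$ and $\SHS(S)$ both encode the same partition of $[n]$ into runs of consecutive integers, so they coincide and the proposition follows.
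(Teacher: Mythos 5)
Your overall route is the same as the paper's: the paper likewise reduces to the row word via Knuth-invariance of $\mathrm{SI}$ and then identifies $\mathrm{SI}(\pi_S)$ with $\SHS(S)$ --- except that the paper dismisses this last identification as ``straightforward to see,'' while you correctly isolate it as the only substantive point and then leave it unproved. Your computation of $\mathrm{SI}(\pi_S)$ is correct: the breaks occur exactly at the descents of $S$, i.e.\ at those $v$ with $\mathrm{row}_S(v+1)>\mathrm{row}_S(v)$.

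The step you ``grant'' is genuinely delicate, and here is both why and how to close it. Under the literal definition of horizontal strip (``no column contains more than one box''), the maximal no-descent runs need \emph{not} form the coarsest chain. Take the tableau $T$ of Example \ref{E:FirstSHSTableau}: the cells of $\{3,4,5,6\}$ are $(2,1),(1,3),(2,2),(1,4)$, which lie in four distinct columns, so $\emptyset\subset(2)\subset(4,2)\subset(4,2,1)\subset(4,2,1,1)$ is a chain of length $4$ all of whose skew differences are horizontal strips --- strictly shorter than the length-$5$ chain the paper exhibits as the growth chain. The identification you want only holds under the (implicit, but standard) convention that each strip is filled with its block of consecutive integers from left to right, i.e.\ in increasing column order. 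With that convention the missing direction is immediate: if a strip contained both $v$ and $v+1$ with $v\in\Des(T)$, then $v+1$ would lie strictly below and hence weakly to the left of $v$ (the usual dichotomy for consecutive entries of a standard tableau), contradicting left-to-right filling; so every strip of any admissible chain sits inside a single no-descent run, and since the no-descent runs themselves form an admissible chain by your argument, they constitute the coarsest one. You should state this convention explicitly (or route the argument through the descent characterization of \S\ref{descreint}) rather than leaving the identification as an assumption.
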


In light of this, we will use the terms ``sequence of horizontal strips'' of a permutation $\pi$, $\SHS(\pi)$, and ``sequence of intervals'' of $\pi$, $\mathrm{SI}(\pi)$, interchangeably.

\subsection{A shape condition and a partial order}
\label{SS:DefineC-S}

For any $S\in SYT_n$ and for any $1\leq i<j\leq n$, we denote by $S_{[i,j]}$ the tableau obtained by first restricting $S$ to the integers in the interval  $[i,j]$ and  then  standardizing the resulting skew tableau  by jeu de taquin backward slides (see \cite{Sagan} for details).

In \cite{Melnikov4}, Melnikov defines the chain order on $SYT_n$ as follows:

\begin{definition}
For $S,T\in SYT_n$ put $S\cho T$
if
\begin{itemize}
\item[(i)] for any $i,j$ such that $1\leq i<j\leq n$, {one has $\sh(S_{[i,j]})\leqq_{dom}^{opp} \sh(T_{[i,j]})$}
\item[(ii)] if for some $i,j$ with $1\leq i<j\leq n$ one has $\sh(S_{[i,j]})= \sh(T_{[i,j]})$, then for any $k,l:\ i\leq k<l\leq j$
one has $\sh(S_{[k,l]})= \sh(T_{[k,l]})$.
\end{itemize}
\end{definition} 

{The following recharacterization will be useful to us in the rest of this paper:}

\begin{proposition}
\label{P:ChainOneLiner}
For $S, T \in SYT_n$, $S\cho T$ if and only if
for all $1\leq i < j \leq n$, either  $\sh(S_{[i,j]}) \lvertneqq_{dom}^{opp} \sh(T_{[i,j]}) $ or  $S_{[i,j]}= T_{[i,j]}$. 
\end{proposition}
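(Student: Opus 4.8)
The statement is an equivalence, so I will prove each implication separately. Both directions rest on two standard properties of the restriction operation $S \mapsto S_{[i,j]}$, which I isolate first. \emph{Nesting of restrictions:} for $S \in SYT_n$ and indices $1 \le i \le k \le l \le j \le n$, one has $S_{[k,l]} = \bigl(S_{[i,j]}\bigr)_{[k-i+1,\, l-i+1]}$ as standard Young tableaux. This is the compatibility of jeu de taquin rectification with deleting the smallest and largest values of a skew tableau; a clean way to see it is to pass to reading words and use that a Robinson--Schensted row insertion never bumps an entry smaller than the one being inserted (one may instead cite this as a property of jeu de taquin, cf.\ \cite{Sagan}). \emph{Initial restrictions determine a tableau:} for $U \in SYT_m$, the entry $t$ of $U$ sits in the cell $\sh(U_{[1,t]}) \slash \sh(U_{[1,t-1]})$ (here $U_{[1,t]}$ is already of straight shape, so the jeu de taquin step is vacuous), whence $U$ is recovered from the chain of shapes $\sh(U_{[1,1]}) \subset \sh(U_{[1,2]}) \subset \cdots \subset \sh(U_{[1,m]})$; this is the classical bijection between $SYT_m$ and saturated chains in Young's lattice. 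Granting these, the rest is a rearrangement of the two defining clauses.

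\textbf{($\Rightarrow$)} Suppose $S \cho T$ and fix $1 \le i < j \le n$. Clause (i) gives $\sh(S_{[i,j]}) \le_{dom}^{opp} \sh(T_{[i,j]})$. If this inequality is strict, the first alternative of the Proposition holds. Otherwise $\sh(S_{[i,j]}) = \sh(T_{[i,j]})$, and clause (ii) yields $\sh(S_{[k,l]}) = \sh(T_{[k,l]})$ for all $i \le k < l \le j$ (and trivially for $k=l$). Specializing to $k = i$ and applying the nesting property with $l = i+m-1$, we get $\sh\bigl((S_{[i,j]})_{[1,m]}\bigr) = \sh(S_{[i,\,i+m-1]}) = \sh(T_{[i,\,i+m-1]}) = \sh\bigl((T_{[i,j]})_{[1,m]}\bigr)$ for every $1 \le m \le j-i+1$. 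By the second property applied to $U = S_{[i,j]}$ and to $U = T_{[i,j]}$, these two tableaux have the same cell-by-cell description, so $S_{[i,j]} = T_{[i,j]}$, which is the second alternative.

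\textbf{($\Leftarrow$)} Conversely, assume the right-hand condition. Clause (i) is immediate: in either case ($\sh(S_{[i,j]}) \lvertneqq_{dom}^{opp} \sh(T_{[i,j]})$, or $S_{[i,j]} = T_{[i,j]}$ and hence equal shapes) we obtain $\sh(S_{[i,j]}) \le_{dom}^{opp} \sh(T_{[i,j]})$. For clause (ii), suppose $\sh(S_{[i,j]}) = \sh(T_{[i,j]})$ for some $i < j$; the first alternative of the hypothesis demands a \emph{strict} inequality, so it is excluded, leaving $S_{[i,j]} = T_{[i,j]}$. Then for any $i \le k < l \le j$ the nesting property gives $S_{[k,l]} = (S_{[i,j]})_{[k-i+1,\,l-i+1]} = (T_{[i,j]})_{[k-i+1,\,l-i+1]} = T_{[k,l]}$, and in particular $\sh(S_{[k,l]}) = \sh(T_{[k,l]})$, establishing (ii). The only ingredient that is not pure bookkeeping is the nesting property for jeu de taquin restrictions; I expect the single point needing care to be deciding whether to write out the short reading-word argument for it or simply to cite it.
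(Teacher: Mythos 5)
Your proof is correct. The paper states this proposition without proof, and your argument — reducing the forward direction to the fact that a tableau is determined by the shapes of its initial restrictions, and the backward direction to the nesting property of jeu de taquin restrictions — is exactly the natural justification the authors leave implicit; both of the auxiliary facts you invoke are standard and correctly applied.
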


Combining the above with the ideas of Section \ref{SS:SHSDefinition}, we define a new order. 

\begin{definition}\label{NewSHSOrderDef}
The {\it chain-strip order} on $SYT_n$ is defined by taking the transitive closure  of the following relation:   $S \le_{C-S} T$  if
\begin{enumerate}
\item For all $1\leq i < j \leq n$, either  $\sh(S_{[i,j]}) \lvertneqq_{dom}^{opp} \sh(T_{[i,j]}) $ or  $S_{[i,j]}= T_{[i,j]}$, and
\item  $\SHS(T)$ is equal to, or a one-step 
refinement of,  $\SHS(S)$. 
\end{enumerate}

\end{definition}

\begin{example}
    Let $S, T \in SYT_6$ be given as $$ S=\tableau{1&3&4\\2&5&6}\qquad \textmd{ and } \qquad T= \tableau{1&3&4\\2&6\\5}.$$ It is easy to see that $S$ and $T$ satisfy the first condition of Definition \ref{NewSHSOrderDef}.  Since  $\SHS(S)=((1),(2,3,4),(5,6)) =\SHS(T)$, we have $S \leq_{C-S} T.$
\end{example}

Partial orders on $SYT_n$ are interesting for various reasons but one of our main motivations in this project is to approximate as much as possible the weak order described in Section \ref{SS:PartitionsETC}. To this end, we state and prove the following:

\begin{proposition} \label{weakimpliesSHS}
For $S,T\in SYT_n$, if $S <_{weak}  T$, then $S \le_{C-S} T$. That is, the chain-strip order is stronger than the weak order. 
\end{proposition}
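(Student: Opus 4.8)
The plan is to reduce to a single weak covering relation. Since both $\leq_{weak}$ on $SYT_n$ and $\leq_{C-S}$ are defined as transitive closures, it suffices to show: if $S$ and $T$ differ by one weak covering step, i.e.\ there exist $\sigma \in \kappa_S$ and $\tau \in \kappa_T$ with $\tau = \sigma s_i$ and $\len(\tau) = \len(\sigma)+1$, then $S \leq_{C-S} T$. Actually one must be slightly careful: the weak order on $SYT_n$ is the transitive closure of the relation ``some representatives compare by one weak step,'' so a covering pair $S <_{weak} T$ need not itself be realized by a single $s_i$ multiplication; but it is realized by a \emph{chain} of such single steps on permutations, and since $\leq_{C-S}$ is transitively closed I only need each single-$s_i$ step to land inside $\leq_{C-S}$ (or inside its generating relation). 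So the target is: $\sigma, \sigma s_i$ with $\len$ increasing, $P(\sigma) = S$, $P(\sigma s_i) = T$, implies $S \leq_{C-S} T$.

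Next I would verify the two clauses of Definition \ref{NewSHSOrderDef} for this pair. For clause (2): multiplying $\sigma$ on the right by $s_i$ swaps the values $i$ and $i+1$ in the one-line notation of $\sigma$. Passing to $\mathrm{SI}$, this can only merge two adjacent maximal intervals (the one ending at $i$ with the one beginning at $i+1$, when they become adjacent after the swap) or leave $\mathrm{SI}$ unchanged — it can never split an interval, since within any maximal interval consecutive integers already sit in increasing order and are not transposed. Hence $\mathrm{SI}(\sigma s_i)$ is either equal to $\mathrm{SI}(\sigma)$ or a one-step coarsening of it, which by Proposition \ref{P:SIsameasSHS} says exactly that $\SHS(T)$ equals or is a one-step refinement of $\SHS(S)$. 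For clause (1): I would invoke the known fact (this is essentially Melnikov's result, and is recorded in Taskin \cite{Taskin}) that $S <_{weak} T \implies S \cho T$, i.e.\ the weak order refines the chain order; combined with Proposition \ref{P:ChainOneLiner}, clause (1) is precisely the statement $S \cho T$. If the paper does not wish to cite this black-box, one proves it directly: for each $[i,j]$, the restriction-standardization $S_{[i,j]} = P(\sigma_{[i,j]})$ where $\sigma_{[i,j]}$ is the subword of $\sigma$ on values in $[i,j]$, and the single transposition $s_i$ either does not touch $[i,j]$ (so the restrictions are identical) or touches it in a length-increasing way, whence a standard dominance-order monotonicity argument for RSK under right weak moves gives $\sh(S_{[i,j]}) \lvertneqq_{dom}^{opp} \sh(T_{[i,j]})$ or equality of tableaux.

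The main obstacle I anticipate is clause (1) in the case where the restriction $\sigma_{[i,j]}$ \emph{is} affected by the swap of $i$ and $i+1$ but the resulting shapes happen to be equal — one must then show the restricted \emph{tableaux} are equal, not merely that the shapes coincide, since Proposition \ref{P:ChainOneLiner} demands $S_{[i,j]} = T_{[i,j]}$ in the equal-shape case. This is where the genuine combinatorics of RSK under weak order lives: when a single adjacent transposition on values preserves the shape of an insertion tableau it must in fact preserve the tableau itself (the move is then ``invisible'' to $P$), and restricting commutes appropriately. I would handle this by the standard jeu-de-taquin/RSK compatibility lemma $P(\sigma)_{[i,j]} = P(\sigma_{[i,j]})$ together with the fact that a value-transposition changing $\len$ by $1$ changes $P$ only if it changes $\sh(P)$. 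Everything else — the transitive-closure bookkeeping and the $\mathrm{SI}$ analysis — should be routine.
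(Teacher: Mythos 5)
Your overall strategy is the same as the paper's: reduce to a single covering step $\tau=\sigma s_i$ with $\len(\tau)=\len(\sigma)+1$, dispose of clause (1) of Definition \ref{NewSHSOrderDef} by citing the known implication $S<_{weak}T\Rightarrow S\cho T$, and verify clause (2) by tracking the effect of the single step on the sequence of intervals. The transitive-closure bookkeeping and the treatment of clause (1) are fine.

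However, your clause (2) analysis contains a genuine error. Right multiplication by $s_i$ swaps the entries in \emph{positions} $i$ and $i+1$ of the one-line notation; it does not swap the \emph{values} $i$ and $i+1$ (that would be left multiplication, which in fact need not send $\mathrm{SI}(\sigma)$ to anything comparable with it). With the correct identification, write $\sigma=x_1\cdots x_{i-1}x_ix_{i+1}\cdots x_n$ and $\tau=x_1\cdots x_{i-1}x_{i+1}x_i\cdots x_n$ with $x_i<x_{i+1}$: if $x_{i+1}>x_i+1$ then $\mathrm{SI}(\tau)=\mathrm{SI}(\sigma)$, while if $x_{i+1}=x_i+1=j+1$ then $j$ precedes $j+1$ in $\sigma$ but follows it in $\tau$, so the maximal interval of $\mathrm{SI}(\sigma)$ containing $j$ and $j+1$ \emph{splits} in $\mathrm{SI}(\tau)$. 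Thus $\mathrm{SI}(\tau)$ is a one-step \emph{refinement} of $\mathrm{SI}(\sigma)$ --- the opposite of the ``merge, never split'' claim you make. Your concluding sentence, that $\mathrm{SI}(\sigma s_i)$ being a one-step \emph{coarsening} of $\mathrm{SI}(\sigma)$ ``says exactly that $\SHS(T)$ equals or is a one-step refinement of $\SHS(S)$,'' is a non sequitur: a proper coarsening of $\SHS(S)$ is never a refinement of $\SHS(S)$, and if your merge claim were actually true the pair would \emph{violate} clause (2). A quick sanity check that the refinement direction is the right one: along the right weak order left inversion sets grow, hence left descent sets grow, and by Proposition \ref{sidesPhi} the left descent set records precisely the breakpoints of $\mathrm{SI}$, so going up can only add breaks. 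Fixing the direction of the swap and of the resulting refinement recovers the paper's argument.
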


\begin{proof}
We know already (see e.g. \cite{Melnikov4} or \cite{Taskin}) that $S <_{weak}  T$ implies $S \cho T$. This takes care of the first condition in Definition \ref{NewSHSOrderDef}. To get the second condition,
it suffices to check the case where there exist	$\sigma\in \kappa_S, \tau\in \kappa_T \text{ such that } \sigma\leq_{weak} \tau		$.
Then $\sigma$ and $\tau$ differ by a single swap of two adjacent numbers $x_i, x_{i+1}$ for some $1\leq i <n$. That is, $\sigma=x_{1} \ldots x_{i-1}{x}_{i}{x}_{i+1} x_{i+2} \ldots x_{n}$ and $\tau=x_{1} \ldots x_{i-1}x_{i+1}{x}_{i}x_{i+2}\ldots x_{n}$. If $x_i$ and $x_{i+1}$ are not consecutive (i.e., $|x_i-x_{i+1}| > 1$),  then $\mathrm{SI}(\sigma) = \mathrm{SI}(\tau)$ and so $\SHS(S) = \SHS(T)$.  If $x_i$ and $x_{i+1}$ are consecutive, then we can assume due to the chain condition that $x_i = j$ and $x_{i+1} = j+1$ for some $1\leq j<n$. 
Then the single swap from $\sigma$ to $\tau$ splits the  interval containing $j$ and $j+1$ in $\mathrm{SI}(\sigma)$ to become two separate intervals in $\mathrm{SI}(\tau)$.
In other words $\SHS(T)$ is a one-step refinement of $\SHS(S)$.
\end{proof}

Computations in {\tt Mathematica} have yielded that the chain-strip order agrees with the weak order till $n=7$, at which point there are counterexamples. That is, we can find $S,T \in SYT_7$ such that $S\le_{C-S}T$ but $S \nleq_{weak} T$; see Section \ref{comparing} for more details. 
Nonetheless, it turns out that the chain-strip order has many of the desirable properties of the weak order, which we explore in Section \ref{properties of the order}.

\subsection{A reinterpretation in terms of descent sets} \label{descreint}

Recall that for $\pi \in S_n$ and $T \in SYT_n$, a handful of inversion and descent sets are defined as follows
\[ \Des_L(\pi):= \{ i\, ; \, 1 \le i \le n-1 \textmd{ and } \pi^{-1}(i)> \pi^{-1}(i+1)\}.\]
\[ \Des(T):= \{ i \, : \, 1\le  i \le n-1 \textmd{ and } i+1 \textmd{ appears in a row below } i \textmd{ in } T\}.\]

For any $\pi$ in a given Knuth class for tableau $T$, we have \cite[Lemma 2.10]{TaskinInner}
\[ \Des_L(\pi) = \Des(T).\] 
In other words, the left descent
set is constant on Knuth classes.

We can read the left descent set of a permutation directly off of its sequence of intervals.
\begin{example}
\label{E:PermSILeftDescent}
    For the permutation $\pi=(2537416)\in S_7$, we can see that the left descent set $\Des_L(\pi)=\{1,4,6\}$ of $\pi$ coincides 
    with the set of positions where an interval in $\mathrm{SI}(\pi)=(({\textbf{1}}),(2,3,{\textbf{4}}),(5,{\textbf{6}}),(7))$ ends and a new one begins (note the bolded numbers).
\end{example}
Using the one-to-one correspondence $\Phi_n$ between the set of all non-crossing set partitions of $[n]$ and subsets of $[n-1]$ given by:
\[ \Phi_n (\{s_1,s_2,\cdots,s_k\}) = \{\max s_1,\max s_2,\cdots, \max s_{k-1}\}, \]
we encode our observation in the following proposition:
\begin{proposition}\label{sidesPhi}
Let $\pi \in S_n$ be given. Then $\Des_L(\pi) = \Phi_n(\mathrm{SP}(\mathrm{SI}(\pi))$.
\end{proposition}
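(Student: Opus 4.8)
The plan is to unwind both sides and show they describe the same subset of $[n-1]$. Write $\mathrm{SI}(\pi) = (\pi^1, \pi^2, \dots, \pi^k)$ for the sequence of maximal intervals of $\pi$. By definition, $\mathrm{SP}(\mathrm{SI}(\pi))$ is the set partition $\{s_1, \dots, s_k\}$ of $[n]$ where $s_j$ is the underlying set of entries of $\pi^j$; since each $\pi^j$ consists of consecutive integers and concatenating them in order yields $1, 2, \dots, n$, each block $s_j$ is a contiguous interval of integers, so $\mathrm{SP}(\mathrm{SI}(\pi))$ is indeed a non-crossing (in fact interval) set partition and $\Phi_n$ applies. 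Applying $\Phi_n$ gives $\{\max s_1, \max s_2, \dots, \max s_{k-1}\}$, i.e. the set of ``right endpoints'' of all the intervals except the last.

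The substance of the argument is then the claim already illustrated in Example~\ref{E:PermSILeftDescent}: for $1 \le i \le n-1$, one has $i \in \Des_L(\pi)$ if and only if $i$ is the maximum of some block $s_j$ with $j < k$, equivalently $i$ and $i+1$ lie in different intervals of $\mathrm{SI}(\pi)$. For the forward direction, suppose $i \in \Des_L(\pi)$, so $\pi^{-1}(i) > \pi^{-1}(i+1)$, meaning $i$ appears to the right of $i+1$ in the one-line notation of $\pi$; then $(i, i+1)$ is not an increasing subsequence of $\pi$, so $i$ and $i+1$ cannot belong to a common increasing subsequence of consecutive integers, hence they lie in different maximal intervals. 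So $i = \max s_j$ for the block $s_j$ containing $i$, and since $i+1$ lies in a later block we have $j < k$. Conversely, if $i$ and $i+1$ lie in different maximal intervals $\pi^a$ and $\pi^b$ with (necessarily) $a < b$, I must check $\pi^{-1}(i) > \pi^{-1}(i+1)$: if instead $i$ appeared before $i+1$ in $\pi$, then $(\dots, i, i+1, \dots)$ restricted to the entries $\{i, i+1\}$ would be increasing, so the maximal interval containing $i$ would extend to include $i+1$ (maximality of intervals is about extending by consecutive integers, and $i+1$ occurring after $i$ is exactly what permits that extension) — contradicting that they lie in distinct maximal intervals. Hence $\pi^{-1}(i) > \pi^{-1}(i+1)$, i.e. $i \in \Des_L(\pi)$.

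Combining: $\Des_L(\pi)$ equals the set of $i$ that are the maximum of some non-final block of $\mathrm{SP}(\mathrm{SI}(\pi))$, which is exactly $\Phi_n(\mathrm{SP}(\mathrm{SI}(\pi)))$.

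The only delicate point is the converse direction of the middle claim, where I need to be careful that ``$i$ and $i+1$ in different maximal intervals'' really forces $i$ to occur after $i+1$ in one-line notation. The subtlety is purely about the definition of a \emph{maximal} interval: a maximal interval is a longest run of consecutive integers that forms an increasing subsequence, so if $i$ occurred before $i+1$ one could always glue $i+1$ onto the right end of the interval containing $i$ (and symmetrically extend downward), contradicting maximality. Making this gluing argument precise — including the edge cases where the interval containing $i$ already has $i+1$ adjacent in value on one side, or where $i$ or $i+1$ is a singleton interval — is the one place the write-up needs to be explicit rather than routine; everything else is bookkeeping.
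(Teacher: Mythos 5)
Your proof is correct and takes exactly the route the paper intends: the paper states this proposition without a written proof, presenting it as the observation illustrated by Example~\ref{E:PermSILeftDescent} that $i\in\Des_L(\pi)$ precisely when $i$ and $i+1$ fall in different maximal intervals, which is the equivalence you verify in both directions. Your formalization (including the maximality/gluing argument for the converse) is a complete and accurate write-up of that observation, so nothing further is needed.
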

Proposition \ref{P:SIsameasSHS}
allows us to restate Proposition \ref{sidesPhi} in terms of tableaux:
\begin{proposition}
For any $S \in SYT_n$,  $\Des(S) = \Phi_n(\mathrm{SP}(\SHS(S)))$.
\end{proposition}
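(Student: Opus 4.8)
The plan is to deduce this directly from the two preceding results, using the dictionary between tableaux and their Knuth classes that has been set up. First I would invoke Proposition~\ref{P:SIsameasSHS}: if $\pi$ is any permutation in the Knuth class $\kappa_S$ of $S$, then $\mathrm{SI}(\pi) = \SHS(S)$ as sequences of (sequences of) consecutive integers. Applying the forgetful functor $\mathrm{SP}$ to both sides gives $\mathrm{SP}(\mathrm{SI}(\pi)) = \mathrm{SP}(\SHS(S))$ as set partitions of $[n]$, and then $\Phi_n$ of both sides gives $\Phi_n(\mathrm{SP}(\mathrm{SI}(\pi))) = \Phi_n(\mathrm{SP}(\SHS(S)))$.

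Next I would combine Proposition~\ref{sidesPhi}, which says $\Des_L(\pi) = \Phi_n(\mathrm{SP}(\mathrm{SI}(\pi)))$, with the fact recalled just above (the cited \cite[Lemma 2.10]{TaskinInner}) that the left descent set is constant on Knuth classes and equals the tableau descent set: $\Des_L(\pi) = \Des(S)$ for every $\pi \in \kappa_S$. Chaining these equalities yields
\[
\Des(S) = \Des_L(\pi) = \Phi_n(\mathrm{SP}(\mathrm{SI}(\pi))) = \Phi_n(\mathrm{SP}(\SHS(S))),
\]
which is exactly the claim. The only thing to be careful about is that $\Phi_n$ as stated is a bijection on \emph{non-crossing} set partitions, so I should note that $\mathrm{SP}(\SHS(S))$ is always a non-crossing (in fact it is an interval, i.e.\ ``staircase'') set partition of $[n]$ — its blocks are consecutive-integer intervals $\{1,\dots,|s_1|\}, \{|s_1|+1,\dots,|s_1|+|s_2|\},\dots$ by the very definition of $\mathrm{SP}$ — so $\Phi_n$ is legitimately applicable and the formula $\Phi_n(\{s_1,\dots,s_k\}) = \{\max s_1,\dots,\max s_{k-1}\}$ makes sense here.

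I do not expect a genuine obstacle: the proposition is purely a translation of Proposition~\ref{sidesPhi} through the identification of $\mathrm{SI}$ on a Knuth class with $\SHS$ of the corresponding tableau. The mild subtlety — and the one place a referee might want a sentence — is ensuring that the Knuth class $\kappa_S$ is nonempty (it always is, as $S = P(\pi)$ for some $\pi$ by the RSK bijection) so that a witnessing permutation $\pi$ exists, and that all of $\Des_L$, $\mathrm{SI}$, and hence $\Phi_n(\mathrm{SP}(\mathrm{SI}(\cdot)))$ are genuinely independent of the choice of $\pi \in \kappa_S$, which follows from the constancy statements already recorded in the excerpt.
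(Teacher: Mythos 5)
Your argument is correct and is exactly the route the paper intends: it derives the statement by chaining $\Des(S) = \Des_L(\pi)$ (constancy of the left descent set on Knuth classes), Proposition~\ref{sidesPhi}, and Proposition~\ref{P:SIsameasSHS}, which is precisely how the paper presents this proposition as a restatement of Proposition~\ref{sidesPhi} in terms of tableaux. Your added remarks on the non-crossing (interval) nature of $\mathrm{SP}(\SHS(S))$ and the nonemptiness of $\kappa_S$ are accurate but not essential.
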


This implies that the chain-strip order can also be defined in terms of descent sets instead of SHS:

\begin{proposition}

Let $S, T \in SYT_n$. Then $S \le_{C-S} T$ if and only if we can find a sequence of tableaux $S=S_0$, $S_1$, $S_2$, $\cdots$, $S_k = T$ in $SYT_n$ such that for each $i < k$, $S_i \cho S_{i+1}$ and  $\Des(S_i) \subseteq \Des(S_{i+1})$ with $|\Des(S_{i+1})| - |\Des(S_i)| \le 1$.
\end{proposition}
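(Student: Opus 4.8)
The plan is to translate the definition of the chain-strip order, which is built as the transitive closure of a one-step relation involving $\SHS$, into the equivalent language of descent sets, using the dictionary established in the three propositions immediately preceding the statement. First I would recall that, by Definition \ref{NewSHSOrderDef}, $S \le_{C-S} T$ holds precisely when there is a chain $S = S_0, S_1, \ldots, S_k = T$ in $SYT_n$ such that for each $i < k$ the pair $(S_i, S_{i+1})$ satisfies the two conditions of that definition: (1) for all $1 \le a < b \le n$, either $\sh((S_i)_{[a,b]}) \lvertneqq_{dom}^{opp} \sh((S_{i+1})_{[a,b]})$ or $(S_i)_{[a,b]} = (S_{i+1})_{[a,b]}$, and (2) $\SHS(S_{i+1})$ equals or is a one-step refinement of $\SHS(S_i)$. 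By Proposition \ref{P:ChainOneLiner}, condition (1) is exactly the statement $S_i \cho S_{i+1}$. So the only thing left to do is to show that condition (2) is equivalent to ``$\Des(S_i) \subseteq \Des(S_{i+1})$ with $|\Des(S_{i+1})| - |\Des(S_i)| \le 1$,'' and then the two characterizations of $\le_{C-S}$ match chain-for-chain.

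For that equivalence I would invoke the proposition stating $\Des(S) = \Phi_n(\mathrm{SP}(\SHS(S)))$ together with the bijectivity of $\Phi_n$. The key observation is that $\SHS(T)$ is a refinement of $\SHS(S)$ (in the sense defined in \S\ref{SS:SHSDefinition}, i.e.\ $\mathrm{SP}(\SHS(T)) \le \mathrm{SP}(\SHS(S))$ as set partitions) if and only if $\Phi_n(\mathrm{SP}(\SHS(T))) \subseteq \Phi_n(\mathrm{SP}(\SHS(S)))$, i.e.\ $\Des(T) \subseteq \Des(S)$ — wait, I need to be careful about the direction here. Looking at Definition \ref{NewSHSOrderDef}(2), it is $\SHS(T)$ that is a refinement of $\SHS(S)$, which means $\SHS(T)$ is \emph{finer}, so $\mathrm{SP}(\SHS(T)) \le \mathrm{SP}(\SHS(S))$, hence (since $\Phi_n$ is order-reversing on the subsets-of-blocks level: finer partition $\Rightarrow$ more block-maxima, as the maxima of a coarse block split off new maxima when refined, except the global maximum $n$ which is always dropped) $\Des(T) = \Phi_n(\mathrm{SP}(\SHS(T))) \supseteq \Phi_n(\mathrm{SP}(\SHS(S))) = \Des(S)$. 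This is consistent with Example \ref{E:FirstSHSTableau} intuition and with the chain condition convention. A one-step refinement, where $|\SHS(T)| = |\SHS(S)| + 1$, corresponds under $\Phi_n$ to adding exactly one element to the descent set, i.e.\ $|\Des(T)| = |\Des(S)| + 1$; the equality case of $\SHS$ gives $\Des(S) = \Des(T)$. Putting these together, condition (2) is equivalent to $\Des(S_i) \subseteq \Des(S_{i+1})$ and $|\Des(S_{i+1})| - |\Des(S_i)| \le 1$, which is exactly what the proposition asks.

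The main obstacle — really the only subtle point — is pinning down the precise combinatorial relationship between the refinement partial order on set partitions arising from $\SHS$ and the containment of descent sets under $\Phi_n$, including the bookkeeping about the dropped maximal block $n$ and the ``one-step'' quantifier. Concretely, I need the lemma: for set partitions of $[n]$ of the special ``interval'' type that arise as $\mathrm{SP}(\SHS(S))$ (consecutive blocks $\{1,\ldots\}, \{\ldots\}, \ldots$), $\alpha \le \beta$ iff $\Phi_n(\alpha) \supseteq \Phi_n(\beta)$, and $\alpha$ is a one-step refinement of $\beta$ iff $\Phi_n(\alpha) = \Phi_n(\beta) \cup \{p\}$ for a single $p$. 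This is essentially immediate from the definition of $\Phi_n$ as the ``list of block-maxima except the last'' — splitting one block of $\beta$ into two consecutive pieces inserts exactly one new maximum into the list and removes none — but I would want to state it cleanly as an intermediate claim so that the chain-for-chain comparison at the end is a one-line formality. After that, the forward and backward implications of the main statement both follow by reading Definition \ref{NewSHSOrderDef} and the new descent-set condition off the same chain $S_0, \ldots, S_k$.
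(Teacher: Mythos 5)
Your proposal is correct and follows exactly the route the paper intends: the paper states this proposition as an immediate consequence of the identity $\Des(S) = \Phi_n(\mathrm{SP}(\SHS(S)))$ and Proposition \ref{P:ChainOneLiner}, which is precisely the dictionary you spell out (including the correct direction, that a finer $\SHS$ corresponds to a larger descent set, and that a one-step refinement adds exactly one block maximum). Your intermediate lemma on interval set partitions under $\Phi_n$ is the only detail the paper leaves implicit, and you supply it correctly.
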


\section{Properties of the Chain-Strip Order} \label{properties of the order}

We begin this section with a few definitions pertaining to sequences of horizontal strips inspired by some standard constructions on permutations and tableaux (\S\S\ref{SS:SHSConstructions}). Using these, we prove a handful of desirable properties of the chain-strip order (\S\S\ref{SS:C-SProperties}). We end the section with some partial results about how the chain-strip order relates to other well-known partial orders defined on $SYT_n$ (\S\S\ref{comparing}).

\subsection{Some constructions on SHS inspired by permutation operations}
\label{SS:SHSConstructions}
We begin with  definitions of some natural constructions on permutations:

\begin{definition}\label{D:RevEvacPerm}
For a permutation $\pi=x_1x_2...x_n$, we define a new permutation by reversing the position of numbers, that is, we define
$$\pi^{rp}=x_nx_{n-1}...x_1 $$ and we define a permutation by reversing the values of $\pi$, that is we define 
$$\pi^{rv}=(n+1-x_1)(n+1-x_{2})...(n+1-x_n).$$ 
Finally, we define the {\it evacuation}  $\pi^{evac}$ of $\pi$ as $\pi^{evac} =(\pi^{rv})^{rp}= (\pi^{rp})^{rv}=(n+1-x_n)(n+1-x_{n-1})...(n+1-x_1)$.\footnote{The evacuation of a permutation can also be defined alternatively as follows: for $\sigma \in S_n$, $\sigma^{evac} = \omega_0 \cdot \sigma \cdot \omega_0$, where $\omega_0$ denotes the longest element of $S_n$.}
\end{definition}

We make analogous definitions for sequences of horizontal strips:

\begin{definition} \label{evacshs}
 Let  
  $s=(s_1,s_2,\cdots,s_k)$
 be a sequence of horizontal strips of total length $n$ (that is, $|s_1|+|s_2|+\cdots+|s_k|=n$). 
 \begin{enumerate}
     \item The {\it reverse} of $s$ is the sequence of horizontal strips $s^r$  which is defined by the rule that whenever $i$ and $i+1$ lie in the same strip of $s$, they lie in two different strips of $s^r$, and vice versa. 
     \item The {\it evacuation} of $s$ is the sequence of horizontal strips $s^{evac}$ which is obtained by replacing every integer  $i$ with
 $n+1- i$, reordering the integers in each strip from left to right in increasing fashion, and finally reordering the subsequences within the sequence to end up with a SHS.
 \end{enumerate}
\end{definition}

These definitions are compatible with one another in a most natural way.

\begin{example}
Let $\pi= 7342516 \in S_7$. Then $s = \SHS(\pi) = ((1),(2),(3,4,5,6),(7))$. Using Definition \ref{D:RevEvacPerm}, we can compute: 
  $$\pi^{rp}= 6152437 \quad \text{ and }  \quad \pi^{evac}= 2736451.$$
   And using Definition \ref{evacshs}, we have 
   \[ s^r=((1,2,3),(4),(5),(6,7)) \quad  \textmd{ and } \quad s^{evac}=((1),(2,3,4,5),(6),(7)).\]
\end{example}

In the example above, note that $s^r = \SHS(\pi^{rp})$ and $s^{evac}= \SHS(\pi^{evac})$. More generally we have:

\begin{lemma} \label{shsinvpir}
Let $\pi \in S_n$ be given. Then    $\SHS(\pi^{rp})=\SHS(\pi)^r$ and  $\SHS(\pi^{evac})=\SHS(\pi)^{evac}.$
\end{lemma}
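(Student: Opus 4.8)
The plan is to reduce both claims to elementary statements about intervals in the one-line notation of $\pi$, using Proposition \ref{P:SIsameasSHS} to move freely between $\SHS$ and $\mathrm{SI}$. Throughout, write $\pi = x_1 x_2 \cdots x_n$, and recall that $\SHS(\pi) = \mathrm{SI}(\pi)$ records exactly which pairs of consecutive values $j, j+1$ occur as an increasing consecutive pair (i.e., with $\pi^{-1}(j) < \pi^{-1}(j+1)$): indeed, $j$ and $j+1$ lie in the same maximal interval of $\mathrm{SI}(\pi)$ if and only if $j$ appears to the left of $j+1$ in $\pi$. So the data of $\SHS(\pi)$ is equivalent to the subset $A(\pi) \subseteq \{1,\dots,n-1\}$ of indices $j$ with $\pi^{-1}(j) < \pi^{-1}(j+1)$ — equivalently, by the definition of $\Des_L$, $A(\pi)$ is the complement of $\Des_L(\pi)$.

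For the first identity, $\SHS(\pi^{rp}) = \SHS(\pi)^r$: reversing the positions of $\pi$ sends $x_k \mapsto x_{n+1-k}$, so for every value $j$, the relative left-to-right order of $j$ and $j+1$ is exactly reversed. Hence $j$ and $j+1$ are in the same interval of $\mathrm{SI}(\pi^{rp})$ if and only if they are in different intervals of $\mathrm{SI}(\pi)$, i.e. $A(\pi^{rp}) = \{1,\dots,n-1\}\setminus A(\pi)$. By Definition \ref{evacshs}(1), this is precisely the defining property of $s^r$ for $s = \SHS(\pi)$; since $\SHS(\pi)^r$ and $\SHS(\pi^{rp})$ are both SHS with the same underlying set partition, they are equal. (One should note here that $\SHS(\pi)^r$ is automatically a legitimate SHS — its set partition is an interval partition of $[n]$ — so the reverse operation is well-defined, which is worth a sentence.)

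For the second identity, $\SHS(\pi^{evac}) = \SHS(\pi)^{evac}$: here $\pi^{evac}$ sends position $k$ to value $n+1-x_{n+1-k}$, which is the composite of $rp$ and $rv$. The map $rv$ replaces each value $x$ by $n+1-x$ without moving positions; so value $j$ appears to the left of value $j+1$ in $\pi^{rv}$ iff value $n+1-j$ appears to the left of value $n-j$ in $\pi$, iff $n-j \notin A(\pi)$. Thus $A(\pi^{rv}) = \{\, j : n-j \notin A(\pi)\,\}$, which is exactly the effect on the set partition of relabeling $i \mapsto n+1-i$ and reordering — matching Definition \ref{evacshs}(2). Since evacuation of a permutation is $(\pi^{rv})^{rp}$ and evacuation of an SHS can likewise be checked to be the composite of "reverse values / relabel" with the reverse from part (1) (or one simply computes $A(\pi^{evac})$ directly: $\pi^{evac} = (\pi^{rp})^{rv}$ gives $A(\pi^{evac}) = \{\,j : n-j \notin \{1,\dots,n-1\}\setminus A(\pi)\,\} = \{\,j : n-j \in A(\pi)\,\}$, wait — one must be careful to take $rp$ then $rv$ consistently), the two sides agree.

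The main obstacle is bookkeeping: $evac$ is a composition of two operations that each act differently (one on positions, one on values), and Definition \ref{evacshs}(2) describes $s^{evac}$ by a relabel-then-reorder recipe rather than by a clean "same/different interval" rule as in part (1). So the crux is to verify carefully that the relabeling $i \mapsto n+1-i$ followed by reordering within strips and among strips produces exactly the interval partition $\{\,j : n-j \in A(\pi)\,\}$ (or its complement, depending on the order of composition), and that this coincides with $A(\pi^{evac})$ computed from the one-line notation. Once the correct index set is pinned down on both sides, the conclusion follows because an SHS is determined by its underlying interval partition of $[n]$.
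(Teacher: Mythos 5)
Your proposal is correct and takes essentially the same route as the paper's proof: both reduce each identity to tracking, for each consecutive pair $j,j+1$, whether $j$ precedes $j+1$ in the one-line notation, and then matching the resulting join/split pattern against Definition \ref{evacshs}. The bookkeeping worry you flag for evacuation dissolves on its own, since $rp$ and $rv$ commute and either order of composition yields $A(\pi^{evac})=\{\,j : n-j\in A(\pi)\,\}$, which is exactly the set of joined pairs produced by the relabeling $i\mapsto n+1-i$ in Definition \ref{evacshs}(2); so the "crux" you defer is the one-line computation you have in fact already carried out.
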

\begin{proof}
For any $1 \leq i < n$, $i, i+1$ are in the same horizontal strip of $\pi$ iff $i$ occurs before $i+1$ in $\pi$ iff $i+1$ occurs before $i$ in $\pi^{rp}$. This corresponds to $i, i+1$ being in the same horizontal strip of $\SHS(\pi)$ iff they are not in the same horizontal strip in $\SHS(\pi)^{r}$, proving $\SHS(\pi^{rp})=\SHS(\pi)^r$. For the second assertion, it suffices to note that $i+1$ comes after $i$ in $\pi$ iff $n+1-(i+1)$ comes before $n+1-i$ in $\pi^{evac}$. Then expanding definitions as we did earlier completes the proof. 
\end{proof}

Definitions \ref{D:RevEvacPerm}-\ref{evacshs} are motivated by analogous constructions on tableaux. 
  Recall that for a standard Young tableau $T \in SYT_n$, the {\it transpose} $T^t$ of $T$ is the standard Young tableau obtained by reflecting the entries in $T$ along the main diagonal. The definition of the {\it evacuation} $T^{evac}$ of $T \in SYT_n$ is somewhat more involved; see \cite{Sagan} for details. 
 For us it will suffice to note that for $S \in SYT_n$, if $\sigma \in \kappa_S$, then $\sigma^{evac} \in \kappa_{S^{evac}}$. More generally we will need the following theorem, proved by Schensted  \cite{Schensted}:
\begin{theorem}[Schensted] \label{schensted}
Let $\pi \in S_n$, and let  $P(\pi)$ be the insertion tableau of $\pi$. Then $P(\pi^{evac})=P(\pi)^{evac}$ and $P(\pi^{rp})=P(\pi)^t$. 
\end{theorem}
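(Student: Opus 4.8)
This is a classical theorem (its evacuation half is, in effect, the fact recalled in the paragraph just before the statement). The plan is to give a uniform proof via jeu de taquin, using two standard facts: that $P(w)$ equals the rectification $\mathrm{rect}(S)$ of any standard skew tableau $S$ whose row reading word (rows read bottom-to-top, each left to right) is $w$; and that $\mathrm{rect}(S)$ depends only on the Knuth class of that reading word.

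For the identity $P(\pi^{rp}) = P(\pi)^{t}$: write $\pi = x_1 x_2 \cdots x_n$ and realize $\pi$ as the row reading word of the ``anti-diagonal'' standard skew tableau $S_\pi$ of the self-conjugate shape $(n, n-1, \dots, 1)/(n-1, n-2, \dots, 1, 0)$, with $x_i$ in the unique box of row $n+1-i$; then $\mathrm{rect}(S_\pi) = P(\pi)$. A single jeu de taquin forward slide clearly transforms, under transposition of the ambient diagram, into the corresponding slide for the transposed diagram, so rectification commutes with transpose: $\mathrm{rect}(S^{t}) = \mathrm{rect}(S)^{t}$. Finally, $S_\pi^{t}$ is again an anti-diagonal skew tableau of the same shape, and unwinding the convention shows its row reading word is $x_n x_{n-1} \cdots x_1 = \pi^{rp}$. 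Hence $P(\pi^{rp}) = \mathrm{rect}(S_\pi^{t}) = \mathrm{rect}(S_\pi)^{t} = P(\pi)^{t}$.

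For the identity $P(\pi^{evac}) = P(\pi)^{evac}$: here I would use the jeu de taquin description of Schützenberger evacuation, namely that for $T \in SYT_n$ the tableau $T^{evac}$ is the rectification of the skew tableau $T^{\vee}$ obtained from $T$ by rotating the diagram by $180^{\circ}$ and replacing each entry $i$ by $n+1-i$. The row reading word of $T^{\vee}$ is the reverse-complement of the row reading word $w$ of $T$ (reverse $w$, then apply $v \mapsto n+1-v$), and on permutation words this operation is exactly $\pi \mapsto \pi^{evac}$. The one thing to verify is that reverse-complement preserves Knuth equivalence: describing a Knuth relation as an adjacent transposition witnessed by a neighbouring entry lying strictly between the two transposed entries, one checks that reversal preserves the neighbour structure while complementation preserves ``lies strictly between'', so a Knuth relation is carried to a Knuth relation. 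Since $\pi$ is Knuth-equivalent to the row reading word of $P(\pi)$, applying reverse-complement shows that $\pi^{evac}$ is Knuth-equivalent to the row reading word of $P(\pi)^{\vee}$, whence $P(\pi^{evac}) = \mathrm{rect}(P(\pi)^{\vee}) = P(\pi)^{evac}$.

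The main obstacle is purely bookkeeping: committing to one reading-word convention and verifying that it makes ``$S_\pi^{t}$ has reading word $\pi^{rp}$'' and ``$T^{\vee}$ has reading word equal to the reverse-complement of that of $T$'' literally correct, together with pinning down the precise $180^{\circ}$-rotation description of evacuation. If one wishes to avoid the latter, Schensted's original route proves $P(\pi^{rp}) = P(\pi)^{t}$ from Greene's theorem on longest increasing and decreasing subsequences (which identifies the shape of $P(\pi^{rp})$ as the conjugate of the shape of $P(\pi)$) followed by an induction on $n$ to recover the full tableau, with the evacuation statement extracted from the standard theory of the Schützenberger involution. In the write-up it is probably cleanest simply to cite Schensted \cite{Schensted} for the classical statement and to record the argument above as the conceptual reason it holds.
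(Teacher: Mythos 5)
Your argument is correct, but it is worth noting that the paper does not prove this statement at all: it is quoted as a classical theorem with a citation to Schensted, exactly as you suggest doing in your final sentence. What you have written is therefore a genuine (and standard) proof where the paper supplies none. The two ingredients you rely on are both sound: jeu de taquin slides commute with transposition of the ambient diagram because the slide rule (move the smaller of the neighbours to the right and below) is symmetric under interchanging those two directions, and the confluence of jeu de taquin then gives $\mathrm{rect}(S^{t})=\mathrm{rect}(S)^{t}$; and reverse--complement carries a Knuth relation to a Knuth relation, since reversal preserves the adjacent-pair-with-witness configuration and complementation preserves ``lies strictly between,'' which matches the paper's own footnoted formulation of the Knuth relations. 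The only step you lean on without proof is the identification of $T^{evac}$ with the rectification of the $180^{\circ}$-rotated, complemented tableau $T^{\vee}$; that is itself a nontrivial theorem of Sch\"utzenberger, so in a self-contained write-up it would need its own citation --- at which point one may as well cite the full statement, as the paper does. Your approach buys a conceptual explanation (and makes transparent why the two halves of the theorem are parallel, one being transposition of the diagram and the other rotation plus complementation), at the cost of importing the jeu de taquin machinery, which the paper otherwise only uses through the standardization of restricted tableaux.
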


In the next section we {use} this theorem, together with Lemma \ref{shsinvpir}, as we talk interchangeably about operations like evacuation and reversion / transposition acting on permutations $\pi$, tableaux $P = P(\pi)$, and associated sequences of horizontal strips $s = \SHS(P) =\SHS(\pi)$.

\subsection{Some basic properties of the chain-strip order and their implications}
\label{SS:C-SProperties}

A partial order $\le$ on $SYT_n$ is said to {\it restrict to segments} if 
 $$S \leq T~~\text{ implies }~~S_{[i,j]} \leq
 T_{[i,j]}~~ \text{ for all } ~~1\leq i<j \leq n.
 $$
Conversely such a partial order $\le$ is said to {\it extend from segments} if 
 $$S \leq T~~\text{ implies }~~\Omega_1(S) \leq
 \Omega_1(T) ~~ \text{ and } ~~\Omega_2(S) \leq
 \Omega_2(T),
 $$
where the maps $\Omega_1$, $\Omega_2$ are defined as follows: For each $T
\in SYT_n$, $\Omega_1:SYT_n \mapsto SYT_{n+1}$ concatenates $n+1$ to
the first row of $T$  from  the right whereas $\Omega_2:SYT_n
\mapsto SYT_{n+1}$ concatenates $n+1$ to the first column  of $T$
from  the bottom.

\newpage
Then the next result follows from definitions and the second result can be easily proven:
\begin{proposition}
\label{P:ResSegC-S}
The chain-strip order restricts to segments. That is, for $S,T \in SYT_n$, we have
 $$S \leq_{C-S} T~~\text{ implies }~~S_{[i,j]} \leq_{C-S}
 T_{[i,j]}~~ \text{ for all } ~~1\leq i<j \leq n.
 $$
\end{proposition}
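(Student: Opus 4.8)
The plan is to show that the defining relation of the chain-strip order (before transitive closure) already restricts to segments, and then observe that restriction commutes with taking transitive closures. So suppose first that $S \le_{C-S} T$ holds via a single step, i.e. conditions (1) and (2) of Definition \ref{NewSHSOrderDef} hold directly. Fix $1 \le i < j \le n$. I must produce a chain-strip relation between $S_{[i,j]}$ and $T_{[i,j]}$.

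For condition (1) restricted to the segment $[i,j]$: I need that for every $i \le k < l \le j$, either $\sh((S_{[i,j]})_{[k,l]}) \lvertneqq_{dom}^{opp} \sh((T_{[i,j]})_{[k,l]})$ or $(S_{[i,j]})_{[k,l]} = (T_{[i,j]})_{[k,l]}$. The key point is that iterated restriction/standardization is consistent: $(S_{[i,j]})_{[k,l]} = S_{[k,l]}$ (after the appropriate shift of labels), which is a standard compatibility property of jeu de taquin backward slides. Granting this, condition (1) for $S \le_{C-S} T$ applied to the pair $(k,l)$ — which is a sub-interval of $[1,n]$ — gives exactly what is needed, using Proposition \ref{P:ChainOneLiner}'s reformulation. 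So condition (1) restricts to segments essentially by definition, which is why the proposition says "the next result follows from definitions."

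For condition (2): I need $\SHS(T_{[i,j]})$ to equal or be a one-step refinement of $\SHS(S_{[i,j]})$. Here I would use Proposition \ref{P:SIsameasSHS} to pass from SHS of tableaux to sequences of intervals of row words, but more directly I would track what restriction does to horizontal strips. If $a, a+1$ both lie in $[i,j]$, then whether they sit in the same strip of $\SHS(S_{[i,j]})$ is governed by whether $a+1$ appears weakly right and weakly up relative to... — more cleanly, by whether $a \in \Des(S_{[i,j]})$, and restriction of a tableau to an interval does not create new descents among the surviving consecutive pairs. Using the descent-set characterization (Proposition following \ref{sidesPhi}, giving $\Des(S) = \Phi_n(\mathrm{SP}(\SHS(S)))$), condition (2) is equivalent to $\Des(S) \subseteq \Des(T)$ with $|\Des(T)| - |\Des(S)| \le 1$, and one checks that passing to the segment $[i,j]$ intersects both descent sets with the same index set $\{i, i+1, \dots, j-2\}$ (suitably shifted), so the containment and the size bound are both preserved.

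The step I expect to be the main obstacle is the compatibility of iterated restriction, $(S_{[i,j]})_{[k,l]} = S_{[k,l]}$ up to relabeling — more precisely, checking carefully that backward jeu de taquin slides done in two stages agree with doing them in one stage. This is standard (it is implicit in the well-definedness of the chain order itself), but it is the one place where a genuine argument about jeu de taquin, rather than bookkeeping, is needed; I would either cite \cite{Sagan} or note that it follows from the confluence of jeu de taquin. Once single-step restriction is in hand, the general case is immediate: if $S = S_0 \cho\!\!\!\!\!\phantom{x} \cdots$ — rather, if $S \le_{C-S} T$ via a chain $S = S_0, S_1, \dots, S_k = T$ of single steps, then restricting each $S_m$ to $[i,j]$ gives a chain witnessing $S_{[i,j]} \le_{C-S} T_{[i,j]}$, completing the proof.
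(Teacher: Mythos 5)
Your proposal is correct, and it follows the same route the paper intends: the paper offers no written proof, stating only that the result ``follows from definitions,'' and your argument supplies exactly the missing details --- single-step restriction of both conditions (compatibility of iterated restriction under jeu de taquin for the chain condition, and invariance of descent sets under backward slides so that $\Des(S_{[i,j]})$ is just $\Des(S)\cap\{i,\dots,j-1\}$ suitably shifted, for the SHS condition), followed by restriction of the witnessing chain for the transitive closure. The two facts you flag as the crux (confluence of jeu de taquin and preservation of descents under slides) are indeed the only non-bookkeeping ingredients, and both are standard.
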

\begin{proposition}
\label{P:ExtSegC-S}
The chain-strip order extends from segments. That is, $$S \leq_{C-S} T~~\text{ implies }~~\Omega_1(S) \leq_{C-S}
 \Omega_1(T) ~~ \text{ and } ~~\Omega_2(S) \leq_{C-S}
 \Omega_2(T),
 $$
\end{proposition}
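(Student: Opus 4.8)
The plan is to verify the two conditions of Definition \ref{NewSHSOrderDef} are preserved under $\Omega_1$ and $\Omega_2$, reducing first to the case of a single covering relation $S \le_{C-S} T$ (in which (1) the chain condition holds and (2) $\SHS(T)$ equals or is a one-step refinement of $\SHS(S)$), since both $\Omega_1$ and $\Omega_2$ respect transitive closures. The key observation is that appending $n+1$ to the first row (for $\Omega_1$) or the first column (for $\Omega_2$) interacts cleanly with jeu-de-taquin restriction and with sequences of horizontal strips, so both conditions can be checked ``locally''.

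\textbf{Condition (1), the chain condition.} For $1 \le i < j \le n+1$ I would analyze $\Omega_\ell(S)_{[i,j]}$ in cases. If $j \le n$, then restricting to $[i,j]$ simply ignores the newly added box $n+1$, so $\Omega_\ell(S)_{[i,j]} = S_{[i,j]}$ and likewise for $T$; the chain condition for this pair is inherited directly from $S \le_{C-S} T$. The remaining case is $j = n+1$. Here $\Omega_\ell(S)_{[i,n+1]}$ is obtained from $S_{[i,n]}$ (up to standardization) by adjoining the maximal entry at the end of the first row (resp.\ first column), which is a jeu-de-taquin-stable operation: adding $n+1$ to the first row extends the first row of $\sh(S_{[i,n]})$ by one box, while adding $n+1$ to the first column extends the first column by one box. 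I would then check that if $\sh(S_{[i,n]}) \lvertneqq_{dom}^{opp} \sh(T_{[i,n]})$ then the same strict inequality persists after extending the first row (resp.\ first column) of each, and that if $S_{[i,n]} = T_{[i,n]}$ then $\Omega_\ell(S)_{[i,n+1]} = \Omega_\ell(T)_{[i,n+1]}$; in the mixed scenario where $\sh$ agrees on $[i,n]$ but the tableaux differ, the chain order's clause (ii)/Proposition \ref{P:ChainOneLiner} forces equality of the shapes which again persists. Combining these cases yields Proposition \ref{P:ChainOneLiner}'s criterion for $\Omega_\ell(S)$ and $\Omega_\ell(T)$.

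\textbf{Condition (2), the SHS condition.} The point is that $\Omega_1$ extends the last strip (or merges $n+1$ into it) and $\Omega_2$ starts a brand-new strip. Concretely: since $n+1$ sits at the end of the first row in $\Omega_1(S)$, it lies in the same horizontal strip as $n$ in the growth chain, so $\SHS(\Omega_1(S))$ is obtained from $\SHS(S)$ by appending $n+1$ to its final strip. Since $n+1$ sits below $n$ (at the bottom of the first column) in $\Omega_2(S)$, it forms a new singleton strip at the end, so $\SHS(\Omega_2(S)) = \SHS(S) \cup \{(n+1)\}$ with $(n+1)$ as a new block. In either case the map on set partitions $\mathrm{SP}$ is ``apply the same modification uniformly,'' so if $\SHS(T)$ is a one-step refinement of $\SHS(S)$ — splitting exactly one block of $\mathrm{SP}(\SHS(S))$ — then after appending $n+1$ to the last block (for $\Omega_1$) or adjoining $\{n+1\}$ as a new block (for $\Omega_2$) the relation ``one-step refinement'' is preserved, because the split happens among the first $n$ elements and is unaffected by the uniform modification; similarly equality is preserved. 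I expect the main obstacle to be the $j = n+1$ case of the chain condition: one must argue carefully that standardizing after jeu-de-taquin backward slides commutes appropriately with adjoining the maximal entry to the first row/column, and that the opposite dominance inequality is genuinely stable under extending a fixed row or column of both partitions by one box — this requires tracking partial sums $\lambda_1 + \cdots + \lambda_i$ and checking that adding a box to row $1$ (shifting all partial sums up by $1$) or to column $1$ (affecting only the comparison once the shorter partition runs out) does not destroy a strict inequality. Once that stability lemma is in hand, everything else is bookkeeping, and the proposition follows by taking transitive closures.
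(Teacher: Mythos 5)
The paper does not actually supply a proof of this proposition (the text dismisses it as something that ``can be easily proven''), so there is no written argument to compare against; your proof is correct and is essentially the intended one, with the right case split. The two points you flag as potential obstacles do go through. First, the identity $\Omega_\ell(S)_{[i,n+1]} = \Omega_\ell\bigl(S_{[i,n]}\bigr)$ (up to relabelling) follows from the reading-word description of restriction together with $P(w\cdot a)=r_a(P(w))$ and $P(a\cdot w)=c_a(P(w))$: row-inserting the maximal letter appends it to the end of the first row, and column-inserting it appends it to the bottom of the first column, so restriction to $[i,n+1]$ commutes with $\Omega_1$ and $\Omega_2$ respectively. Second, for the stability of opposite dominance: under $\Omega_1$ every partial sum on both sides increases by $1$, and under $\Omega_2$ observe that $\lambda \lvertneqq_{dom}^{opp} \mu$ already forces $\lambda$ to have at most as many parts as $\mu$, so the only new partial-sum comparison is the one in which the $\lambda$-side has reached $|\lambda|+1$, which holds automatically; in both cases inequality of shapes persists. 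One small tidy-up: your ``mixed scenario'' (shapes agree on $[i,n]$ but tableaux differ) is vacuous, since Proposition \ref{P:ChainOneLiner} excludes exactly that case under the hypothesis $S\leq_{C-S}T$. The SHS bookkeeping is cleanest in descent-set language, where it reads $\Des(\Omega_1(U))=\Des(U)$ and $\Des(\Omega_2(U))=\Des(U)\cup\{n\}$ for every $U\in SYT_n$, from which preservation of ``equal or one-step refinement'' is immediate.
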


We put together two more results about maps preserving the chain-strip order in the next proposition:

\begin{proposition}
The following maps are order-preserving:
\begin{enumerate}
\item[(i)]  The map
$$
(SYT_{n},\leq_{C-S}) \rightarrow  (2^{[n-1]}, \subseteq)
$$
sending a tableau $S$ to its descent set $\Des(S)$.
\item[(ii)] The map
$$
(SYT_{n},\leq_{C-S}) \rightarrow (\Par_{n},\leq^{op}_{dom})
$$
sending $S$ to its shape $\sh(S)$.
\end{enumerate}
\end{proposition}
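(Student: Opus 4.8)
The plan is to handle the two maps separately, reducing each to the defining generating relation of $\le_{C-S}$ via transitivity, so that it suffices to treat a single covering step $S \le_{C-S} T$ coming directly from Definition \ref{NewSHSOrderDef}.

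For part (i), I would argue that the descent map is order-preserving by invoking Proposition~\ref{sidesPhi} (in its tableau form): $\Des(S) = \Phi_n(\mathrm{SP}(\SHS(S)))$. Since $\Phi_n$ is the explicit bijection sending a non-crossing set partition to the set of maxima of its non-final blocks, and since $\SHS(S)$, $\SHS(T)$ have blocks that are \emph{intervals} of consecutive integers, a one-step refinement from $\SHS(S)$ to $\SHS(T)$ corresponds exactly to splitting one interval block into two, which adds one new maximum (the max of the left piece) and changes nothing else. Hence $\Des(S) \subseteq \Des(T)$ with $|\Des(T)| - |\Des(S)| \le 1$. If instead $\SHS(S) = \SHS(T)$ then the descent sets are literally equal. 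Either way $\Des(S) \subseteq \Des(T)$ on a generating step, and transitivity of $\subseteq$ finishes it.

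For part (ii), again it suffices to consider a generating step $S \le_{C-S} T$. Condition (1) of Definition~\ref{NewSHSOrderDef} applied with $i = 1$, $j = n$ gives either $\sh(S_{[1,n]}) \lvertneqq_{dom}^{opp} \sh(T_{[1,n]})$ or $S_{[1,n]} = T_{[1,n]}$. But $S_{[1,n]} = S$ and $T_{[1,n]} = T$, so either $\sh(S) \lvertneqq_{dom}^{opp} \sh(T)$, or $S = T$; in both cases $\sh(S) \le_{dom}^{opp} \sh(T)$. Transitivity of $\le_{dom}^{opp}$ then gives the claim for the full (transitively closed) relation.

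The only point requiring care — and the ``main obstacle,'' such as it is — is the bookkeeping in part (i): one must confirm that the blocks of $\mathrm{SP}(\SHS(S))$ are genuinely the non-crossing (in fact interval) partition on which $\Phi_n$ is defined, and that a ``one-step refinement'' in the sense of Section~\ref{SS:SHSDefinition} really does split a single interval block into two consecutive interval blocks rather than doing something more complicated; this follows from the definition of $\mathrm{SP}$ as the forgetful functor to interval set partitions and the definition of one-step refinement as $|s| = |s'| + 1$ together with $\mathrm{SP}(s) \le \mathrm{SP}(s')$. Once that is nailed down, both parts are immediate from results already established in the excerpt (Propositions~\ref{sidesPhi}, \ref{P:SIsameasSHS}, and the $i=1,j=n$ instance of the chain condition), so I would keep the write-up short.
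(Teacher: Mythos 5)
Your proposal is correct and follows essentially the same route as the paper: part (i) via the SHS--descent correspondence of Section \ref{descreint} (containment of descent sets under refinement), and part (ii) directly from the shape clause of Definition \ref{NewSHSOrderDef} at $i=1$, $j=n$. You simply spell out the block-maxima bookkeeping that the paper leaves implicit.
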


\begin{proof}
 In Section \ref{descreint} we described the relationship between SHS and descent sets. In essence,  \textit{i+1} occurs below \textit{i} in $S$ exactly when a new horizontal strip begins in its growth chain. Thus if $S,T \in SYT_n$ are related in the chain-strip order, say, $S \le_{C-S} T$, they must subsequently have containment of descent sets, that is, $\Des(S) \subseteq \Des(T)$, in turn implying that the map sending a tableau to its descent set is order-preserving.\footnote{ This is also implied by Lemma 3.2 of \cite{Taskin} because the chain-strip order is stronger than the weak order (Proposition \ref{weakimpliesSHS}) and restricts to segments (Proposition \ref{P:ResSegC-S}).} 
The second one follows directly from the shape component of Definition \ref{NewSHSOrderDef}.
\end{proof}

Next we consider two other natural operations defined on $SYT_n$ and see how they interact with the chain-strip order. We begin with transposition:

\begin{proposition}
The map
$$
(SYT_{n},\leq_{C-S}) \rightarrow  (SYT_{n},\leq_{C-S})
$$
sending $T$ to its transpose $T^t$ is a poset anti-automorphism.
\end{proposition}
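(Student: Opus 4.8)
The plan is to show that $T \mapsto T^t$ is an order-reversing bijection that is also an involution, hence a poset anti-automorphism. Since transposition is clearly an involution on $SYT_n$ (reflecting twice along the main diagonal returns the original tableau), the only real content is verifying that $S \leq_{C-S} T$ if and only if $T^t \leq_{C-S} S^t$; by the involution property it suffices to prove one direction, say $S \leq_{C-S} T \Rightarrow T^t \leq_{C-S} S^t$.

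First I would reduce to a single covering step: by the definition of the chain-strip order as a transitive closure, I only need to handle the generating relation, i.e.\ suppose $S \leq_{C-S} T$ via conditions (1) and (2) of Definition \ref{NewSHSOrderDef}, and then show $T^t \leq_{C-S} S^t$ via the same two conditions (possibly as a single step, possibly needing a short chain). Then I would check the two conditions separately. For condition (1), the key facts are that transposition commutes with restriction to intervals and standardization, i.e.\ $(S_{[i,j]})^t = (S^t)_{[i,j]}$ (jeu de taquin slides are compatible with transposition), and that transposing a Young diagram reverses the opposite dominance order: if $\lambda \lvertneqq_{dom}^{opp} \mu$ then $\mu^t \lvertneqq_{dom}^{opp} \lambda^t$ on the conjugate shapes. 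Combining these, the condition ``$\sh(S_{[i,j]}) \lvertneqq_{dom}^{opp} \sh(T_{[i,j]})$ or $S_{[i,j]} = T_{[i,j]}$'' transforms exactly into ``$\sh((T^t)_{[i,j]}) \lvertneqq_{dom}^{opp} \sh((S^t)_{[i,j]})$ or $(T^t)_{[i,j]} = (S^t)_{[i,j]}$'', which is condition (1) for $T^t \leq_{C-S} S^t$. (I should double-check the exact direction of the dominance reversal against Definition \ref{D:OppositeDominance}, but this is a standard fact about conjugate partitions.)

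The more delicate point is condition (2), the SHS behavior. Here I would invoke Lemma \ref{shsinvpir} together with Schensted's Theorem \ref{schensted}: for $\pi \in S_n$ one has $P(\pi^{rp}) = P(\pi)^t$ and $\SHS(\pi^{rp}) = \SHS(\pi)^r$, so transposition of tableaux corresponds, on the level of SHS, to the ``reverse'' operation $s \mapsto s^r$ of Definition \ref{evacshs}. The reverse operation swaps ``$i, i+1$ in the same strip'' with ``$i, i+1$ in different strips,'' i.e.\ at the level of set partitions it complements membership in each consecutive pair, and crucially it is \emph{order-reversing} on the refinement lattice restricted to the relevant interval-partitions: if $\SHS(T)$ is a one-step refinement of $\SHS(S)$ (one extra cut), then $\SHS(S)^r = \SHS(S^t)$ has exactly one more cut than $\SHS(T)^r = \SHS(T^t)$, so $\SHS(S^t)$ is a one-step refinement of $\SHS(T^t)$ — which is precisely condition (2) for $T^t \leq_{C-S} S^t$. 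The equality case ($\SHS(S) = \SHS(T)$) is immediate. I would want to state explicitly the small lemma that $s \mapsto s^r$ reverses the one-step-refinement relation and preserves equality, since that is where the ``anti-'' in anti-automorphism comes from.

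The main obstacle I anticipate is making the interaction between the two conditions airtight: condition (2) only permits a refinement changing the number of strips by at most one, so when I reverse, I get $\SHS(S^t)$ refining $\SHS(T^t)$ by exactly one step, and I need to confirm that conditions (1) and (2) for $T^t \leq_{C-S} S^t$ can indeed be certified in a \emph{single} generating step rather than needing to be broken into a chain — in other words that the covering relation of $\leq_{C-S}$ is itself symmetric under transposition, not merely the transitive closure. If a single step does not obviously suffice, the fallback is to argue at the level of the transitive closure: transpose each step of a witnessing chain $S = S_0 \cho \cdots = T$, reverse the order of the chain to get $T^t = S_k^t, \ldots, S_0^t = S^t$, and observe each reversed link $S_{i+1}^t \leq_{C-S} S_i^t$ holds by the single-step analysis above, yielding $T^t \leq_{C-S} S^t$. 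Finally, bijectivity and the involution property give that the inverse map ($\cdot^t$ again) is also order-reversing, completing the proof that $\cdot^t$ is a poset anti-automorphism.
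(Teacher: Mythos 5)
Your proposal is correct and follows essentially the same route as the paper: reduce to a single one-step-refinement link (transposing each link of a witnessing chain and reversing its order), handle the SHS condition via Lemma \ref{shsinvpir} and Theorem \ref{schensted} by observing that the reverse operation flips every ``same strip / different strip'' status and hence sends a one-step refinement $\SHS(S)\geq\SHS(T)$ to a one-step refinement $\SHS(T^t)\geq\SHS(S^t)$. The only difference is that for the chain-order condition the paper simply cites \cite{Taskin} (transposition is a poset anti-automorphism of $\cho$), whereas you sketch that fact directly via compatibility of transposition with restriction and the reversal of opposite dominance under conjugation.
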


\begin{proof}
We know that the map sending $T$ to its transpose is a poset anti-automorphism of the chain order \cite{Taskin}, so we only need to check the horizontal strip part of Definition \ref{NewSHSOrderDef}.  It suffices to prove the statement in the case of some $S$ and $T$ differing by at most a one-step refinement. Let $\pi \in \kappa_S$ for $S \in SYT_n$. Then by Theorem \ref{schensted}, $\pi^{rp} \in \kappa_{S^t}$, and by Lemma \ref{shsinvpir}, $\SHS(S^t)=(\SHS(S))^r$. So for $T \in SYT_n$ such that $S{\leq}_{C-S}T$, let us assume that $\SHS(T)$ is at most a one-step refinement of $\SHS(S)$. Then by the definition of the reverse of a SHS (Definition \ref{evacshs}), $\SHS(T^t)$ would split consecutive strips if they are joined in $\SHS(T)$ and join them if they are split in $\SHS(T)$, and likewise for $\SHS(S)$. Subsequently $\SHS(S^t)$ has to be at most a one-step refinement of $\SHS(T^t)$, yielding $T^t\leq_{C-S}S^t.$ 
\end{proof}

Now we consider the evacuation map \cite{Sagan, Schutzenberger1}. {W}e begin with an {illustrative} example. 

\begin{example}
    Let $S, T \in SYT_6$ be: \[  S= \tableau{1&2&5\\3&4&6} \qquad \textmd{ and } \qquad T= \tableau{1&4&5\\2&6\\3}. \]  Then $\SHS(S)=((1,2),(3,4,5),(6)) $  and $ \SHS(T)=((1),(2),(3,4,5),(6)),$
    implying that $\SHS(T)$ is a one-step refinement of $\SHS(S)$ 
   and  minimal calculations on shapes of the restricted parts yield $S {\leq}_{C-S} T$. Then we compute: 
\[  S^{evac}= \tableau{1&3&4\\2&5&6}  \qquad \textmd{ and } \qquad T^{evac}= \tableau{1&3&4\\2&5\\6}, \]
together with   $\SHS(S^{evac})=((1),(2,3,4),(5,6))$  and $\SHS(T^{evac})=((1),(2,3,4),(5),(6)).$ 
In other words, $\SHS(T^{evac})$ is a one-step refinement of $\SHS(S^{evac})$, which implies that $S^{evac} \le_{C-S} T^{evac}$.
     \end{example}
    
This motivates the following proposition:
\begin{proposition}

 The Sch\"utzenberger's evacuation map
$$
(SYT_{n},{\leq}_{C-S}) \rightarrow  (SYT_{n},{\leq}_{C-S})
$$
sending $T$ to its evacuation tableau $T^{evac}$ is a poset
automorphism.
\end{proposition}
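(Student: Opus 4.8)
The plan is to argue exactly as in the transposition case, piggybacking on the corresponding fact for the chain order and then handling the SHS component separately. First I would invoke the known result (see \cite{Taskin}) that evacuation $T \mapsto T^{evac}$ is a poset automorphism of the chain order $\cho$; this dispatches condition (1) of Definition \ref{NewSHSOrderDef}. Since evacuation is an involution, it suffices to show that $S \le_{C-S} T$ implies $S^{evac} \le_{C-S} T^{evac}$, and by taking transitive closures it is enough to treat a single generating step, i.e. the case where $S$ and $T$ satisfy condition (1) and $\SHS(T)$ is equal to or a one-step refinement of $\SHS(S)$.

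So the real content is to show that evacuation on tableaux induces the operation $s \mapsto s^{evac}$ on SHS, and that this operation $\cdot^{evac}$ on sequences of horizontal strips preserves the ``equal to or one-step refinement of'' relation in the right direction. For the first point I would pick $\pi \in \kappa_S$; then by Theorem \ref{schensted} (Schensted) $\pi^{evac} \in \kappa_{S^{evac}}$, and by Lemma \ref{shsinvpir} $\SHS(S^{evac}) = \SHS(\pi^{evac}) = \SHS(\pi)^{evac} = \SHS(S)^{evac}$. For the second point I would unpack Definition \ref{evacshs}(2): the map $i \mapsto n+1-i$ sends ``$i$ and $i+1$ in the same strip'' to ``$n-i$ and $n+1-i$ in the same strip,'' so $\cdot^{evac}$ is, at the level of which consecutive pairs $\{j,j+1\}$ are ``joined,'' just the relabeling $j \mapsto n-j$ — a bijection of $[n-1]$. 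Consequently, if $\SHS(T)$ is obtained from $\SHS(S)$ by splitting exactly one joined consecutive pair (a one-step refinement), then $\SHS(T)^{evac}$ is obtained from $\SHS(S)^{evac}$ by splitting exactly one joined consecutive pair as well; and if $\SHS(S) = \SHS(T)$ then $\SHS(S)^{evac} = \SHS(T)^{evac}$. Thus $\SHS(T^{evac})$ is equal to or a one-step refinement of $\SHS(S^{evac})$, and combined with the chain-order statement this gives $S^{evac} \le_{C-S} T^{evac}$. Applying the same argument to $S^{evac} \le_{C-S} T^{evac}$ and using $(T^{evac})^{evac} = T$ shows the map is a bijection on each comparability pair, hence an automorphism.

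The main obstacle I expect is purely bookkeeping: making the claim ``$\cdot^{evac}$ acts on the set of joined consecutive pairs by the relabeling $j \mapsto n-j$'' precise, since Definition \ref{evacshs}(2) phrases evacuation of a SHS in terms of replacing entries, sorting within strips, and reordering the strips — one must check that after all this reshuffling the resulting partition of $[n]$ into intervals really is the one whose ``cut set'' $\Phi_n(\mathrm{SP}(\cdot))$ is the image of the original cut set under $j \mapsto n-j$. This is most cleanly done via the descent-set reinterpretation of Section \ref{descreint}: $\Des(S^{evac}) = \{\, n-i : i \in \Des(S)\,\}$ is standard, and $\Des = \Phi_n \circ \mathrm{SP} \circ \SHS$, so the ``one-step'' condition $|\Des(S_{i+1})| - |\Des(S_i)| \le 1$ with containment is transparently preserved. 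I would lean on that reformulation rather than manipulating strips directly.
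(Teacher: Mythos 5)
Your proposal is correct and follows essentially the same route as the paper: reduce to the chain-order automorphism result from \cite{Taskin}, use Theorem \ref{schensted} and Lemma \ref{shsinvpir} to identify $\SHS(S^{evac})$ with $\SHS(S)^{evac}$, and observe that evacuation relocates the single new break from position $i$ to position $n-i$, so one-step refinements are preserved. Your closing suggestion to verify the bookkeeping via descent sets is a reasonable (and slightly cleaner) way to phrase the last step, but it is not a different argument.
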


   \begin{proof} We know that the map sending $T$ to its evacuation tableau is a poset automorphism of the chain order \cite{Taskin}, so we only need to check the horizontal strip part of Definition \ref{NewSHSOrderDef}.
   
   Suppose that $S \le_{C-S} T$. Let $\sigma \in \kappa_S$ and $\tau \in \kappa_T$.  If $\SHS(\sigma)=\SHS(\tau)$, then by Lemma \ref{shsinvpir}, we have $\SHS(\sigma^{evac})=\SHS(\sigma)^{evac}=\SHS(\tau)^{evac}= \SHS(\tau^{evac})$. So we only need to consider the case where $\SHS(\tau)$ is a one-step refinement of $\SHS(\sigma)$. The transitive closure case will follow automatically.

   We suppose, therefore, that $\SHS(\tau)$ is a one-step refinement of $\SHS(\sigma)$, implying that the two SHS are identical except for one break at some location $i$, so that $i$ and $i+1$ are in the same strip of $\SHS(\sigma)$ while they are in two distinct strips of $\SHS(\tau)$. 
   
    Recall from Definition \ref{evacshs} that 
   evacuation of an SHS
   corresponds to replacing every integer  $i$ with
 $n+1- i$, reordering the integers in each strip from left to right in increasing fashion, and finally reordering the subsequences within the sequence to end up with a new SHS.
    This implies that $\SHS(\sigma^{evac})$
    and $\SHS(\tau^{evac})$ are identical except for one break at location 
    $n-i$. 
  That is, $\SHS(\tau^{evac})$ is a one-step refinement of $\SHS(\sigma^{evac}$). Then Theorem \ref{schensted} allows us to conclude that $\SHS(T^{evac})$ is a one-step refinement of $\SHS(S^{evac})$ and we are done.
    \end{proof}

We wrap up this section with a neat implication of the properties of the chain-strip order which we already proved. 
Recall that the weak order can be used to describe the multiplicative structure of the Poirier–Reutenauer Hopf algebra that was originally described in terms of jeu de taquin slides \cite{Poirier-Reutenauer}; moreover, the same structure can be described by any other partial order on $SYT_n$ satisfying a handful of properties \cite{Taskin}. 
It turns out that the chain-strip order is just one such order:

\begin{proposition}
The multiplicative structure of the  Poirier-Reutenauer Hopf algebra can be defined using $\leq_{C-S}$ as follows: for $S \in SYT_k$ and $T \in SYT_l$ with $k+l = n$
 $$ S \ast T = \sum_{\substack{R \in SYT_n:\\
S\slash T \leq_{C-S} R \leq_{C-S} S\backslash T}} R, $$
where $S \slash T$ (respectively $S\backslash T$)  denotes the tableau in $SYT_n$ whose columns (respectively rows) are obtained by concatenating the columns (respectively rows) of ${T}^{+k}$, the partial tableau obtained as a result of adding $k$ to every entry of $T$, to the bottom of the columns of $S$. This operation is commutative because of the construction of the Hopf algebra.

\end{proposition}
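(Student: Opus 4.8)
The plan is to derive the formula from the general principle, established by Taskin \cite{Taskin}, that the Poirier--Reutenauer product can be written as $S\ast T=\sum_R R$ with $R$ ranging over the interval $[\,S\slash T,\ S\backslash T\,]$ computed in \emph{any} partial order on $SYT_n$ that enjoys a short list of structural properties --- the case of the weak order being the classical description going back to \cite{Poirier-Reutenauer}. Concretely, the properties one needs are that the order be stronger than the weak order, weaker than the chain order, and restrict to segments (and, if the sharper form of \cite{Taskin} is invoked, also extend from segments and be compatible with transposition and with evacuation). Given this, the proof reduces to checking that $\leq_{C-S}$ has the required properties, all of which have in fact already been established in this section.

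First I would record the two ``sandwiching'' inclusions $\leq_{weak}\ \subseteq\ \leq_{C-S}\ \subseteq\ \cho$. The first is exactly Proposition \ref{weakimpliesSHS}. For the second, observe that condition (1) in Definition \ref{NewSHSOrderDef} is, by Proposition \ref{P:ChainOneLiner}, precisely the relation $S\cho T$; hence every generating relation of $\leq_{C-S}$ implies $S\cho T$, and since $\cho$ is itself transitive, the transitive closure defining $\leq_{C-S}$ stays inside $\cho$. The remaining structural properties are likewise already in hand: restriction to segments is Proposition \ref{P:ResSegC-S}, extension from segments is Proposition \ref{P:ExtSegC-S}, and compatibility with transposition and with evacuation are the poset anti-automorphism and automorphism results proved just above. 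Feeding these into \cite{Taskin} yields the product formula in the statement, and commutativity is immediate from the construction of the Hopf algebra, as the statement already notes.

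The main obstacle is not computational but a matter of correctly matching hypotheses: one must pin down exactly which of the ``handful of properties'' \cite{Taskin} actually uses, and in particular verify that what drives the argument is the coincidence of the weak-order and chain-order intervals $[\,S\slash T,\ S\backslash T\,]$. Once that coincidence is available the squeeze is automatic: the weak-order interval is contained in the chain-strip interval because $\leq_{weak}\subseteq\leq_{C-S}$, while the chain-strip interval is contained in the chain-order interval because $\leq_{C-S}\subseteq\cho$, so equality of the two outer intervals forces the chain-strip interval to coincide with both, hence with the support of $S\ast T$. No further work is then needed.
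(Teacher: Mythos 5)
Your proposal is correct and follows essentially the same route as the paper: the paper's proof is a two-line citation of Theorem 4.2 of \cite{Taskin}, invoking exactly two hypotheses --- that $\leq_{C-S}$ is stronger than the weak order (Proposition \ref{weakimpliesSHS}) and restricts to segments (Proposition \ref{P:ResSegC-S}). Your additional verifications (containment in the chain order, extension from segments, compatibility with transposition and evacuation) are correct but not needed for this particular theorem of \cite{Taskin}.
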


\begin{proof}
As the chain-strip order is stronger than the weak order (Propositions \ref{weakimpliesSHS}) and it restricts to segments (Proposition \ref{P:ResSegC-S}),
this follows from Theorem 4.2 of \cite{Taskin}.
\end{proof}

As the chain-strip order is particularly straightforward to work with, it has potential as a computational aid to those working with this Hopf algebra.

\subsection{Comparing with other known orders} \label{comparing}

It is known \cite{Melnikov4, Taskin} that the four partial orders on $SYT_n$, the weak, Kazhdan-Lusztig, geometric, and chain orders, satisfy the following:
$$\text{weak order}\subsetneq \text{Kazhdan-Lusztig (KL) order} \subset \text{geometric order} \subsetneq \text{chain order.}$$
In other words, the weak order is weaker than the KL, geometric, and chain orders, and alternatively, the chain order is  stronger than all three.
It is also known that these four orders on $SYT_n$ coincide for $n \leq 5$ and for $n \geq 6$, they differ.

We have already seen that the chain-strip order is stronger than the weak order (Proposition \ref{weakimpliesSHS}). It is a natural question to ask how close we get. 

It turns out that for $n\le 6$ the chain-strip order coincides with the weak order. For $n=7$, it 
is the same as the weak order except for four pairs (which are connected through transposition and evacuation) of newly comparable tableaux for our order. That is, for $S$ and $T$ given as the following two tableaux from $SYT_7$:
$$S=\tableau{1&3&4&5\\2&7\\6} \qquad \text{ and } \qquad  T=\tableau{1&3&4\\2&5&7\\6},$$
we have $S \le_{C-S} T$ but $S \not <_{weak} T$. Similarly we have that $S^{evac} \le_{C-S} T^{evac}$, $T^t \le_{C-S} S^t$, and $T^{{evac}^t} \le_{C-S} S^{{evac}^t}$, while these same pairs are not comparable in the weak order.

Interestingly, if we modify our order to include at most two-step refinements of the associated sequences of horizontal strips, then for $n=7$, these same four pairs are the only difference between this new order and the KL and geometric orders---and so equivalently, the only difference between this new order and Melnikov's two partial orders (Duflo-Vogan and Vogan-Chain cf.\ \cite{Melnikov4}), as the latter are all equal to KL and geometric orders until $n=10$. 

The above discussion shows that constraints on SHS (or, given Section \ref{descreint}, descent conditions) are a flexible and easily adjustable tool that might be worth pursuing in order to get a better combinatorial handle on the many partial orders defined on $SYT_n$. Furthermore, for $n\le 6$, the agreement of the chain-strip order with the weak order can significantly streamline computations. Note also that while the complexity of other partial orders on $SYT_n$ grows with each $n \geq 7$, the chain-strip order and subsequent SHS / descent computations remain relatively manageable.
Thus if we can understand better the discrepancies between the chain-strip order and other partial orders, we can use the simplicity of the chain-strip order to carry out computations and analyses of other partial orders in an easier context.

\section{Remaining questions and further work} \label{remaining questions}

Partial orders that are stronger than the weak order satisfy a handful of interesting properties. For example, if such a partial order satisfies certain additional conditions, the subposet $SYT_m^R$~ ~of~
~$SYT_m$ defined by:
$$SYT_m^R=\{T\ \in SYT_m \mid T_{[1,k]}=R\},$$
for some $R \in SYT_k$ with $m=k+n$,  has a really simple characterization \cite{Taskin}. The chain-strip order satisfies these conditions. Recalling the definitions of $\hat{0}_{R,n}$ and $\hat{1}_{R,n}$ as the following:

 $\put(0,100){\makebox(1,1){}}
 \put(42,50){\makebox(12,35){$\hat{0}_{R,n}$=}}
 \put(85,65){$R$}
\put(65,90){\line(1,0){70}}
 \put(105,60){\line(1,0){30}}
\put(65,40){\line(1,0){40}} \put(135,60){\line(0,1){30}}
\put(105,40){\line(0,1){20}} \put(65,40){\line(0,1){50}}
\put(135.5,74.5){\framebox(57,15){{\scriptsize $k\!\!+\!\!1~\ldots~k\!\!+\!\!n$}}}
\put(232,50){\makebox(12,25){$\hat{1}_{R,n}$=}} \put(275,65){$R$}
\put(255,90){\line(1,0){70}}
 \put(295,60){\line(1,0){30}}
\put(255,40){\line(1,0){40}}
 \put(325,60){\line(0,1){30}}
\put(295,40){\line(0,1){20}}
 \put(255,40){\line(0,1){50}}
\put(255.5,4){\framebox(19,36){\shortstack{{\scriptsize
$k\!\!+\!\!1$}\\\\.\\.\\.\\{\scriptsize $k\!\!+\!\!n$}}}}$

\noindent
we can state the following explicitly:
\begin{proposition} Let $R \in SYT_k$ with $m=k+n$ be given.
Then, with respect to the chain-strip order $\leq_{C-S}$, the 
subposet $SYT_m^R$~ ~of~
~$SYT_m$ is precisely the interval $[\hat{0}_{R,n},\hat{1}_{R,n}]$.
Furthermore, 
 the  proper part of $SYT_m^R$  is homotopy equivalent to
  $$
\begin{cases}
\text{ an } (n-2)-\text{dimensional sphere} & \text{ if } R \text{ is
rectangular,} \\
\text{ a point }                          & \text{ otherwise.} \\
\end{cases}
$$\end{proposition}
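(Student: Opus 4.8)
The plan is to deduce the proposition from the structural theorem of Taskin \cite{Taskin} describing subposets of the shape $SYT_m^R$. That theorem applies to any partial order $\preceq$ on $SYT_n$ satisfying a handful of structural conditions — being stronger than the weak order, restricting to segments, and extending from segments — and for such an order it identifies $SYT_m^R$ with the interval $[\hat{0}_{R,n},\hat{1}_{R,n}]$ in $(SYT_m,\preceq)$ and computes the homotopy type of the proper part of this interval. The chain-strip order meets all three requirements: it is stronger than the weak order by Proposition \ref{weakimpliesSHS}, it restricts to segments by Proposition \ref{P:ResSegC-S}, and it extends from segments by Proposition \ref{P:ExtSegC-S}. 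So the proposition is an immediate consequence; the remaining work is to recognize the two halves of the identification in our language and to record the homotopy conclusion.

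To see $SYT_m^R=[\hat{0}_{R,n},\hat{1}_{R,n}]$ directly, first note that $\hat{0}_{R,n}=\Omega_1^n(R)$ and $\hat{1}_{R,n}=\Omega_2^n(R)$, where $\Omega_1$ (resp.\ $\Omega_2$) appends the next integer to the end of the first row (resp.\ to the bottom of the first column); in particular $(\hat{0}_{R,n})_{[1,k]}=(\hat{1}_{R,n})_{[1,k]}=R$, so both tableaux lie in $SYT_m^R$. For the inclusion $[\hat{0}_{R,n},\hat{1}_{R,n}]\subseteq SYT_m^R$: if $\hat{0}_{R,n}\leq_{C-S}T\leq_{C-S}\hat{1}_{R,n}$, then since $\leq_{C-S}$ restricts to segments (Proposition \ref{P:ResSegC-S}) we may apply $(-)_{[1,k]}$ to obtain $R\leq_{C-S}T_{[1,k]}\leq_{C-S}R$, whence $T_{[1,k]}=R$ by antisymmetry. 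For the reverse inclusion, fix $T\in SYT_m^R$ and induct on $n$ (the case $n=0$ being trivial): the restriction $T_{[1,m-1]}$ lies in $SYT_{m-1}^R$, so by the inductive hypothesis $\hat{0}_{R,n-1}\leq_{C-S}T_{[1,m-1]}\leq_{C-S}\hat{1}_{R,n-1}$; since $\leq_{C-S}$ extends from segments (Proposition \ref{P:ExtSegC-S}) this lifts to $\hat{0}_{R,n}=\Omega_1(\hat{0}_{R,n-1})\leq_{C-S}\Omega_1(T_{[1,m-1]})$ and $\Omega_2(T_{[1,m-1]})\leq_{C-S}\Omega_2(\hat{1}_{R,n-1})=\hat{1}_{R,n}$, and it remains only to verify the ``relocation'' comparisons $\Omega_1(T_{[1,m-1]})\leq_{C-S}T\leq_{C-S}\Omega_2(T_{[1,m-1]})$ — i.e.\ that moving the maximal entry of $T$ to the end of the first row only moves down in $\leq_{C-S}$, and to the bottom of the first column only moves up.

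For the homotopy statement I would invoke the corresponding computation in \cite{Taskin}, which under the same hypotheses shows that the order complex of the proper part of $[\hat{0}_{R,n},\hat{1}_{R,n}]$ is homotopy equivalent to an $(n-2)$-sphere when $R$ is rectangular and is contractible otherwise. Conceptually: when $R$ is rectangular the added cells are forced to accumulate exactly as they would over the empty tableau, so that $SYT_m^R$ carries (after relabeling) the same structure as $SYT_n$ under $\leq_{C-S}$, whose proper part is homotopy equivalent to $S^{n-2}$; when $R$ is not rectangular the presence of an inner corner yields a cone point (equivalently, a complete acyclic Morse matching), so the complex is collapsible.

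The step I expect to be the main obstacle is the final ``relocation'' comparison: relocating the maximal entry of $T$ is not a single weak-order move, so one must check directly, for every segment $[i,j]$, the opposite-dominance inequality $\sh\big((\Omega_1(T_{[1,m-1]}))_{[i,j]}\big)\leq_{dom}^{opp}\sh\big(T_{[i,j]}\big)$ (with equality of tableaux when the inequality is not strict) together with the requirement that the associated SHS changes by at most a one-step refinement. Should this prove awkward, there is a clean bypass: one has $\leq_{weak}\ \subseteq\ \leq_{C-S}\ \subseteq\ \cho$ (the first inclusion is Proposition \ref{weakimpliesSHS}, the second follows at once from Proposition \ref{P:ChainOneLiner} since each generating step of $\leq_{C-S}$ already implies $\cho$), and $SYT_m^R$ is known \cite{Melnikov4, Taskin} to equal $[\hat{0}_{R,n},\hat{1}_{R,n}]$ for both the weak and the chain orders; squeezing, the $\leq_{C-S}$-interval must also equal $SYT_m^R$, and the homotopy type is then supplied by \cite{Taskin} applied to $\leq_{C-S}$ itself.
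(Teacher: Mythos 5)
Your main argument --- that the chain-strip order is stronger than the weak order (Proposition \ref{weakimpliesSHS}), restricts to segments (Proposition \ref{P:ResSegC-S}), and extends from segments (Proposition \ref{P:ExtSegC-S}), so the result follows from the general theorem of \cite{Taskin} --- is exactly the paper's proof, which consists of this citation and nothing more. The additional ``direct'' verification you sketch (with its unfinished relocation step) is not needed for the argument and can be dropped.
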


\begin{proof}
As the chain-strip order is stronger than the weak order (Propositions \ref{weakimpliesSHS}), restricts to segments (Proposition \ref{P:ResSegC-S}), and extends from segments (Proposition \ref{P:ExtSegC-S}), this follows from Proposition 7.1 of \cite{Taskin}.
\end{proof}

However, there are other properties of the chain-strip order that we have yet to explore. In this section we look at two specific properties.

\subsection{Robinson-Schensted Insertion Property}

Given a Young tableau $T \in SYT_n$ and $a\in \{1,\ldots,n+1\}$, let $\ov
T_a$ be the partial tableau obtained from $T$ by adding $1$ to all entries greater than or equal to $a$.
A partial order $\leq$ on $SYT_n$ is said to have the \textit{Robinson-Schensted (RS) insertion property} (first introduced by \cite{Melnikov4}) if  
 $S,T\in SYT_n$ are such that $S \leq T$, 
then for any $a\in\{1,\ldots,n+1\}$ one has $c_a( \ov S_a)\leq 
c_a( \ov T_a)$ and $r_a(\ov S_a) \leq 
r_a(\ov T_a).$

While the weak and Kazhdan-Lusztig orders have the RS insertion property, the chain order does not \cite{Melnikov4}. We can use an example from \cite{Melnikov4} to show that this property is also not satisfied by the chain-strip order. Let $S, T \in SYT_7$ be given as follows:
$$
T= \tableau{1&2&6\\3&5\\4&7} \qquad {\rm and}\qquad
S= \tableau{1&2&6\\3&7\\4\\5}.
$$

We have $T \leq_{C-S} S$ because  $T\chos S$ and  $\SHS(S)= ((1,2),(3),(4),(5,6),(7))$ is a one-step refinement of 
$((1,2),(3),(4,5,6),(7)) =\SHS(T)$. However, for
$$
c_5(\ov T_5)= \tableau{1&2&7\\3&6\\4&8\\5} \qquad {\rm and}\qquad c_5(\ov S_5)=
 \tableau{1&2&7\\3&6&8\\4\\5},
$$
$\SHS(c_5(\ov T_5))= ((1,2),(3),(4),(5,6,7),(8))=\SHS(c_5(\ov S_5))$, 
but $c_5(\ov T_5)\not\cho c_5(\ov S_5)$, and so $c_5(\ov T) \not \leq_{C-S} c_5(\ov S)$.

We note here that neither row nor column RS insertion changes the SHS of a tableau in a way that would lead to a violation of the SHS condition in Definition \ref{NewSHSOrderDef}. That is, given $S,T$ with $S \leq_{C-S} T$, and $\SHS(T)$ being at most a one-step refinement of $\SHS(S)$, it is easily seen by the rules of RSK insertion that $r_a(\ov T_a)$ (respectively, $c_a(\ov T_a)$) is at most a one-step refinement of $r_a(\ov S_a)$ (respectively, $c_a(\ov S_a)$).

The question of how to extend the definition of the chain-strip order so as to incorporate the RS insertion property remains a task for further work.

\subsection{Inner tableau translation property on our order}

A partial order $\leq$ on $SYT_n$ is said to have the {\it inner translation property} if  for any $S,T \in SYT_n$ with $S \le T$, $S^{\prime}\leq T^{\prime}$,
where the tableaux $S^{\prime}$ and $T^{\prime}$ are obtained by applying to $S$ and $T$ a single dual Knuth relation on some triple $(i-1,i,i+1)$ of consecutive numbers with $1 < i < n$.\footnote{ \label{FN:DualKnuth} We say  $\sigma, \tau \in S_{n}$ differ by a single  {\it dual Knuth relation},  if for some consecutive integers $1\leq i<i+1<i+2\leq n$,
$\sigma= \ldots i \ldots i+2 \ldots i+1 \ldots \textmd{ and }
	\tau=\ldots i+1 \ldots i+2 \ldots i \ldots\ldots x_{n}$
     or
$     \sigma= \ldots i+1 \ldots i \ldots i+2 \ldots \textmd{ and }
	\tau=\ldots i+2 \ldots i \ldots i+1 \ldots\ldots x_{n}$.
     Two permutations are called  {\it dual Knuth
		equivalent}, written $\sigma  ~\dKnuth~ \tau$, if one  can be obtained
	from the other by applying a sequence of  dual Knuth relations. It is not hard to see that $\sigma  ~\dKnuth~ \tau$ if and only if $\sigma^{-1}  ~\Knuth~ \tau^{-1}$. See 
	\cite{Sagan} for more details.} While the KL and geometric orders have the inner translation property, the weak and chain do not \cite{Melnikov4, Taskin}. It is then natural to expect that the chain-strip order will not, either. We illustrate with an example from \cite{Taskin} how the SHS / descent constraints can contribute to such failure:

\begin{example}
    Let $S,T \in SYT_6$ with  $\SHS(S)=((1,2),(3,4),(5,6))=\SHS(T)$ be given as follows:
$$
S= \tableau{1&2&4\\3&5&6} \qquad {\rm and}\qquad
T= \tableau{1&2&4\\3&6\\5}.
$$
Minimal calculations on the restrictions of the shapes of $S$ and $T$ yield the result $S \leq_{C-S} T$. However, after applying a single dual Knuth relation on the triple $\{3,4,5\}$, we get:
$$
S^{\prime}= \tableau{1&2&3\\4&5&6} \qquad {\rm and}\qquad
T^{\prime}= \tableau{1&2&5\\3&6\\4}.
$$
As $\SHS(T^{\prime})=((1,2),(3),(4,5),(6))$ is a two-step refinement of $\SHS(S^{\prime})=((1,2,3),(4,5,6))$, and there is no intermediate shape between $\sh(S^{\prime})$ and $\sh(T^{\prime})$, $S^{\prime} \nleq_{C-S} T^{\prime}$. 

\end{example}

    Although the chain-strip order does not satisfy the inner translation property, one might wonder if it might satisfy a slightly restricted version. 
The \emph{inner tableau translation property} was first defined in \cite{Taskin}:

\begin{definition}
Let $R, \tilde{R} \in SYT_k$ be two tableaux with the same shape, $k < n$. For $S,T \in SYT_n$ having the same inner tableau $R$ (that is, $S_{[1,k]} = T_{[1,k]} = R$), let $\tilde{S}$ (respectively, $\tilde{T}$) be the tableau obtained by replacing $R$ with $\tilde{R}$ in $S$ (respectively in $T$). Then we say that a given partial order $\leq$ on $SYT_n$ has the \emph{inner tableau translation property} if $$S \leq T \text{ implies } \tilde{S} \leq \tilde{T}.$$
\end{definition}

It is not known whether the weak order has the inner tableau translation property, but if it does, then we know that this can be used to provide a combinatorial proof of the fact that  the weak order is well-defined  \cite{TaskinInner}. Moreover  this  property plays a crucial role in  understanding  the structure of the Poirier-Reutenauer Hopf algebra \cite{TaskinInner}.

    However it turns out that the chain order, and consequently the chain-strip order, cannot have the inner {tableau} translation property. Let $S,T \in SYT_7$ be given as follows:  $$S=\tableau{1&2&6\\3&5\\4&7} \qquad \text{ and } \qquad T=\tableau{1&2&6\\3&7\\4\\5}.$$
    We have $S\cho T$  \cite[Section 3.9]{Melnikov4}, and as $\SHS(T)= ((1,2),(3),(4),(5,6),(7))$ is a one-step refinement of $\SHS(S) = ((1,2),(3),(4,5,6),(7))$, we also have $S \le_{C-S} T$.

    Now if we define $R \in SYT_4$ as $R = S_{[1,4]} = T_{[1,4]}$ and set $\tilde{R} \in SYT_4$ to be the result of the single dual Knuth move on $(1,2,3)$ in $R$, then we have:
$$\tilde{S}=\tableau{1&3&6\\2&5\\4&7} \qquad \text{ and } \qquad \tilde{T}=\tableau{1&3&6\\2&7\\4\\5}.$$
We can now see that $\tilde{S}\stackrel {\rm  Ch}{\nleq} \tilde{T}$ (and consequently, $\tilde{S} \nleq_{C-S} \tilde{T}$) 
by noting that the opposite dominance condition on the shapes of all restrictions (as in Proposition \ref{P:ChainOneLiner}) does not hold:\ in particular $\sh(\tilde{S}_{[2,7]}) = \sh(\tilde{T}_{[2,7]})$\ but $\tilde{S}_{[2,7]} \neq  \tilde{T}_{[2,7]}$.
    
This example captures a fundamental failure of the chain order in handling hook-shaped tableaux during the tableau translation process---the same problem does not come up with a four-box tableau of shape $(2,2)$.
The fact that the weak order is preserved by tableau translations involving hooks \cite[Proposition 4.7]{TaskinInner} points towards an intriguing divergence between the chain order and the weak order.


\begin{thebibliography}{99}
	
	\bibitem{Bjorner} A. Bj\"orner, Topological Methods, {\it Handbook of Combinatorics, (R. Graham, M. Gr\"oschel and L. Lov\'asz, eds.)}, Elsevier, Amsterdam, (1995), 1819--1872.
	

	\bibitem{Fomin}
	S. Fomin, The generalised Robinson-Schensted-Knuth correspondence, Zap. Nauchn. Sem. Leningrad. Otdel. Mat. Inst. Steklov. (LOMI) 155 (1986), 156--175, 195.

	
	
\bibitem{Joseph} Joseph, A.,
     {{$W$}-module structure in the primitive spectrum of the
              enveloping algebra of a semisimple {L}ie algebra},
{\it Noncommutative harmonic analysis ({P}roc. {T}hird {C}olloq., {M}arseille-{L}uminy, 1978)},
 {Lecture Notes in Math.},
Volume {728}, pages {116--135},
     {Springer, Berlin},
      {1979}.  

      
	
	\bibitem{Kazhdan-Lusztig}
	D. Kazhdan, G. Lusztig, Representations of Coxeter groups and Hecke
	algebras, {\it Invent. Math.}, {\bf 53} (1979), 165--184.
	
	
	\bibitem{Knuth}
	D. E. Knuth, Permutations, matrices and generalized Young tableaux,
	{\it Pacific J. Math.}, {\bf 34} (1970), 709--727.
	
	
	
	\bibitem{Melnikov1}
	A. Melnikov, On orbital variety closures of $\mathfrak{sl}_{n}$, I.
	Induced Duflo order,
{\it J. Algebra},  {\bf 271} (2004) 179–233. 

	\bibitem{Melnikov4}
	A. Melnikov, On orbital variety closures of $\mathfrak{sl}_{n}$,
   	III. Geometric properties, 
   	{\it J. Algebra,} {\bf 305} (2006), 68--97. 

	\bibitem{Poirier-Reutenauer}
	S. Poirier, C. Reutenuer, Alg\'ebres de Hopf de Tableaux, {\it Ann.
		Sci. Math. Qu\'ebec}, {\bf 19}, (1995), no. 1, 79--90.
	
	
	\bibitem{Roby}
	T. Roby, F. Sottile, J. Stroomer, J. West, Complementary Algorithms for Tableaux, {\it Journal of Combinatorial Theory}, Series A {\bf 96} (2001), 127--161.
	
	\bibitem{Sagan}
	B.E. Sagan, {\it The Symmetric Group, Second edition}.
	Springer-Verlag, New York, Inc., (2001).
	
	\bibitem{Schensted} C. Schensted, Longest increasing and decreasing
	subsequences, {\it Canad. J. Math.}, {\bf 13}, (1961), 179--191.
	
	\bibitem{Schutzenberger1}
	M. P. Sch\"utzenberger, Quelques remarques sur une construction de
	Schensted, {\it Math. Scand.}, {\bf 12}, (1963), 117--128.
	
	
	\bibitem{Schutzenberger2}
	M. P. Sch\"utzenberger, La correspondence de Robinson, {\it
		Combinatoire et Repr\'esentation du Groupe Sym\'etrique}, Lecture
	Notes in Math.,{\bf  579},  Springer, Berlin (1977), 59--135.
	
	\bibitem{Taskin}
	M. Taskin, Properties of four partial orders on standard Young
	tableaux, {\it J. of Combin. Theory Ser. A.}, {\bf 113}: 6 (2006), 1092--1119.
	
	\bibitem{TaskinInner}
	M. Taskin, Inner tableau translation property of the weak order and related results. {\it Proc. Amer. Math. Soc.} {\bf 141} (2013), no. 3, 837--856. 
	
	\bibitem{Vogan}
	D. Vogan, Ordering of the primitive spectrum of a semisimple Lie
	algebra, {\it Math. Ann.} {\bf 248} (1980), 195--2003.
	
\end{thebibliography}
\end{document}